 \pgfplotsset{compat=1.13}
\definecolor{color2}{rgb}{0.872549019607843,0.019607843137255,0.0819607843137255}
\definecolor{dred}{rgb}{0.92,0,0}
\newcommand\restr[2]{{
  \left.\kern-\nulldelimiterspace 
  #1 
  \vphantom{\big|} 
  \right|_{#2} 
  }} 
  \newcommand{\abs}[1]{\left\lvert#1\right\rvert}
\newcommand{\norm}[1]{\left\lVert#1\right\rVert}
\let\hat\widehat
\let\tilde\widetilde
\crefname{hypothesis}{Hypothesis}{Hypotheses}
\begin{document}

\title{A minimal stabilization procedure for isogeometric methods on trimmed geometries\thanks{Submitted.
\funding{The authors were partially supported by ERC AdG project CHANGE n. 694515, by MIUR PRIN project "Metodologie innovative nella modellistica differenziale numerica", and by Istituto Nazionale di Alta Matematica (INdAM).}}}
\date{August 2018}

\author{A. Buffa  \footnotemark[3]\ \thanks{ Istituto di Matematica Applicata e
    Tecnologie Informatiche "Enrico Magenes" del CNR, Pavia, Italy.}
\and R. Puppi  \thanks{Chair of Numerical Modelling and Simulation, Institute of Mathematics, \'Ecole Polytechnique F\'ed\'erale de Lausanne, Lausanne, Switzerland (\email{annalisa.buffa@epfl.ch}, \email{riccardo.puppi@epfl.ch}, \email{rafael.vazquez@epfl.ch})}.
 \and R. V\'azquez  \footnotemark[3]\ \footnotemark[2] }
\maketitle

\begin{abstract}
Trimming is a common operation in CAD, and, in its simplest formulation, consists in removing superfluous parts from a geometric entity described via splines (a spline patch). After trimming the geometric description of the patch remains unchanged, but the underlying mesh is unfitted with the physical object. 
We discuss the main problems arising when solving elliptic PDEs on a trimmed domain. First we prove that, even when Dirichlet boundary conditions are weakly enforced using Nitsche's method, the resulting method suffers lack of stability. Then, we develop novel stabilization techniques based on a modification of the variational formulation, which allow us to recover well-posedness and guarantee accuracy. Optimal a priori error estimates are proven, and numerical examples confirming the theoretical results are provided.

 \end{abstract}

\begin{keywords}
isogeometric analysis, trimming, unfitted finite element, finite element methods, stabilized methods
\end{keywords}

\begin{AMS}
 65N12, 65N15, 65N30, 65N85
\end{AMS}

\section{Introduction}
Complex models are processed within Computer Aided Design (CAD) tools where several geometric manipulations are possible. Geometries are described as collection of their boundary surfaces, often defined as tensor-product splines or NURBS, and during the design process these surfaces can be joined, intersected or simply superflous parts can be cut away. All these Boolean operations act on the original surfaces through a common procedure of trimming.
When the superfluous surface areas are cut away, the visualization of the resulting surface changes, while its mathematical description does not. This description of the geometry is called ``boundary representation'' (B-rep) (see, e.g., \cite{piegl_ten_2005,Stroud2006}, or the recent review \cite{Marussig2017} and references therein) and is clearly not well suited for the simulation of PDEs.

Several efforts have been undertaken in the last years to improve the usability of CAD geometries in the solution of PDEs, especially thanks to the advent of Isogeometric Analysis (IGA) \cite{Cottrell:2009:IAT:1816404,HUGHES20054135} and its tremendous success (see, e.g., \cite{MR3761002,buffa,1078-0947_2019_1_241,CMAME-IGA2017,MR3811621}). The geometric modelling community has also provided important inputs to this scientific challenge \cite{Cohen15}, and, in this respect, volumetric representations (V-rep) are a major contribution \cite{Massarwi}. On the other hand, trimming remains a main tool for the design of complex models via Boolean operations, and basically all developments of IGA described above rely on strong requirements on the underlying geometric models and, in general, do not support trimmed geometric entities. The aim of this paper is to contribute to the design of isogeometric methods that robustly support trimming in the geometric description of the computational domain, and do not require the construction of a global re-parametrization (meshing).

Two main issues arising when dealing with trimmed geometries are the presence of elements unfitted with the boundary, making the research for efficient quadrature rules and the stable imposition of boundary conditions a challenge, and the existence of basis functions whose support has been cut, affecting the conditioning of the related linear system. 
Let us briefly review some of the most successful methods which have been proposed so far in this regard.

In connection with the construction of quadrature rules on trimmed elements let us mention, from the engineering side, the pioneering works \cite{NAGY2015165,Wang2015}, the shell analysis on geometric models with B-reps \cite{BREITENBERGER2015401,PHILIPP2016312}, and the finite cell method combined with IGA \cite{Dauge2015,RANK2012104,MR3003066,MR3706626}.

Concerning the lack of stability and the ill-conditioning of the stiffness matrix we should mention stabilization based on polynomial extrapolation in the parametric domain \cite{MARUSSIG201879,MARUSSIG2017497} and the so-called Cut-IGA method proposed in \cite{ELFVERSON20191,1812.08568}. The former traces its roots back to the groundbreaking work~\cite{doi:10.1137/S0036142900373208} where the authors, by modifying the discrete functional space, tackle both the stability and the conditioning issues. The theory was proved in \cite{doi:10.1137/S0036142900373208} for finite elements with (extended) B-splines, which corresponds to consider below the map $\F$ equal to the identity (i.e. without the isogeometric map), and the polynomial extrapolation may lead to sub-optimal convergence properties, as it will be discussed in Section~\ref{sec:construction}. The latter is a generalization of the Cut-FEM method \cite{BURMAN20101217,NME:NME4823,BURMAN2012328,Burman2012,Hansbo434217} born in the framework of fictitious domain methods for finite elements. It is based on a modification of the weak formulation of the problem by the addition a penalisation term on the boundary. This technique aims to deal at the same time with the ill-conditioning and the lack of stability of the bilinear form. Note that the penalisation term acts on the jumps of the normal derivatives of all orders over cut elements' boundaries, which can be very demanding if high-order B-splines are employed for the analysis (which is usually the case in IGA).

We now describe our simplified mathematical setting. Let $\Omega_0\subseteq \R^d$ (here $d=2,3$)  be a domain described by a bijective spline map $\mathbf{F}: (0,1)^d \to \Omega_0$, i.e. a patch in the isogeometric terminology, and let $\Omega_1,\ldots ,\Omega_N$ be bounded domains of $\R^d$. We assume that $\bigcup_{i=1}^N\overline{\Omega}_i$ are to be cut away from $\Omega_0$, i.e.
\begin{equation}\label{48}
\Omega=\Omega_0\setminus\bigcup_{i=1}^N\overline{\Omega}_i,
\end{equation}
and we assume that the computational domain $\Omega$ is Lipschitz.
After trimming the mathematical description of the domain remains unchanged, that is, the elements and basis functions fit the boundary of $\Omega_0$ instead of that of $\Omega$. In this paper, we focus on a simple Poisson problem, with weakly imposed boundary conditions, in the domain $\Omega$ described above. First of all, we discuss the difference between bad matrix conditioning and lack of stability. The former can be improved by modifying the chosen basis (preconditioning), while the latter needs to act on the bilinear form directly. 

Regarding the lack of stability, we propose a stabilization technique, inspired by \cite{Renard}, that acts only on those cut elements that affect stability. For example, in the case of a Neumann condition on the trimmed boundary, no stabilization is needed. The stabilization is ``minimal'' in the sense that no additional parameters are introduced, in contrast with the CutFEM~\cite{BURMAN2012328} and Finite Cell methods~\cite{Dauge2015} for instance.  Our stabilization is parameter free and its computation requires only local projections at the element level, and only for ``bad'' cut elements. We follow a ``local approach''~\cite{Marussig2017}, i.e. we modify the analysis, rather than the geometry, in order to be able to face the challenges arising from trimming. Moreover, we remain faithful to the so-called \emph{isogeometric paradigm}, in the sense that we just locally modify the weak formulation, while keeping the discrete functional space unaffected.



We present two different versions of the stabilization. The first one is based on polynomial extrapolation in the parametric domain, that is easier to implement from the numerical point of view, but suboptimal in some cases. The second one is a projection-based stabilization performed directly on the physical domain, which allows us to recover optimal a priori error estimates.

Concerning the conditioning issue, we do not have a sound solution to the problem, but we constructed tests to check the behaviour of the condition number of the stiffness matrix. Numerical evidences show that a rescaling of the stiffness matrix, coupled with our stabilization, greatly reduces the condition number, although it does not solve the conditioning issue in all configurations (see Section~\ref{subsection_ns_cond}). A clear theoretical understanding of this issue is beyond the scope of this paper and, in this regard, the interested reader is referred to \cite{MoBner} where a $L^p$-stable basis is constructed in the context of B-splines and to \cite{DEPRENTER2017297,Prenter:2020aa}.

The document is organized as follows. Section \ref{4} presents an overview of isogeometric analysis in trimmed domains. In Section \ref{35} we set the model problem, and explain the main challenges we need to face, namely integration, conditioning and numerical stability of the associated linear system. 
After having explained in detail in Section \ref{section_stability} the causes for the lack of stability of the simple Nitsche's formulation, in Section \ref{36} we present our new stabilization technique. Two possible constructions of the stabilization operator are suggested and analysed in Section~\ref{sec:construction}, and error estimates are provided in Section~\ref{sec:estimate}. Finally, we conclude by showing some numerical examples, obtained using the MATLAB library GeoPDEs \cite{VAZQUEZ2016523}, confirming the theoretical results.

\section{Isogeometric analysis on trimmed domains}\label{4}


\subsection{The univariate case}
For a more detailed introduction to isogeometric analysis, we refer the interested reader to the review article \cite{buffa}. Given two positive integers $p$ and $n$, we say that $\Xi:=\{\xi_1,\dots,\xi_{n+p+1} \}$ is a \emph{p-open knot vector} if
\begin{equation*}
\xi_1=\dots=\xi_{p+1}<\xi_{p+2}\le\dots \le \xi_n<\xi_{n+1}=\dots=\xi_{n+p+1}.
\end{equation*}
We assume $\xi_1=0$ and $\xi_{n+p+1}=1$. We also introduce $Z:=\{\zeta_1,\dots,\zeta_M \}$, the set of \emph{breakpoints}, or knots without repetitions, which forms a partition of the unit interval $(0,1)$. Note that
$
\Xi=\{\underbrace{\zeta_1,\dots,\zeta_1}_{m_1\;\text{times}},\underbrace{\zeta_2,\dots,\zeta_2}_{m_2\;\text{times}},\dots,\underbrace{\zeta_M,\dots,\zeta_M }_{m_M\;\text{times}}\},
$
where $\sum_{i=1}^Mm_i=n+p+1$. Moreover, we assume $m_j\le p$ for every internal knot and we denote $I_i:=(\zeta_i,\zeta_{i+1})$ and its measure $h_i:=\zeta_{i+1}-\zeta_{i}$, $i=1,\dots, M-1$.

We denote as $\hat{B}_{i,p}:[0,1]\to\R$ the i-th \emph{B-spline}, $1\le i\le n$, obtained using the Cox-de Boor formula, see for instance \cite{buffa}. Moreover, let $S_p(\Xi)=\operatorname{span}\{\hat{B}_{i,p}: 1\le i\le n \}
$ the vector space of univariate splines of degree $p$. $S_p(\Xi)$ can also be characterized as the space of piecewise polynomials of degree $p$ with $k_j:=p-m_j$ continuous derivatives at the breakpoints $\zeta_j$, $1\le j\le M$ (Curry-Schoenberg theorem).

Moreover, given an interval $I_j=\left( \zeta_j,\zeta_{j+1}\right)=(\xi_i,\xi_{i+1})$, we define its \emph{support extension} $\tilde I_j$ as
\begin{equation*}
\tilde I_j:=\operatorname{int}\bigcup\{\operatorname{supp}(\hat{B}_{k,p}): \operatorname{supp}(\hat{B}_{k,p})\cap I_j\ne\emptyset, 1\le k\le n \}=\left(\xi_{i-p},\xi_{i+p+1}\right).
\end{equation*}

\subsection{The multivariate case}
Let $d$ be the space dimension. Assume that $M_\ell,n_\ell\in\N$, $p\in\N$, $\Xi_\ell=\{\xi_{\ell,1},\dots,\xi_{\ell,n_\ell+p+1} \}$ and $Z_\ell=\{\zeta_{\ell,1},\dots,\zeta_{\ell,M_\ell} \}$ are given for every $1\le \ell\le d$. We set the degree $\mathbf{p}:=(p,\dots,p)$ and $\mathbf{\Xi}:=\Xi_1\times\dots\times\Xi_d$. With no risk of ambiguity we will write $p$ in place of $\mathbf p$. Note that the breakpoints of $Z_\ell$ form a Cartesian grid in the \emph{parametric domain} $\hat{\Omega}_0=(0,1)^d$. We define the \emph{parametric B\'ezier mesh}
\begin{equation*}
\hat{\mathcal{M}}_{0}=\{Q_{\mathbf{j}}=I_{1,j_1}\times\dots\times I_{d,j_d}: I_{\ell,j_\ell}=(\zeta_{\ell,j_\ell},\zeta_{\ell,j_\ell+1}): 1\le j_\ell\le M_\ell-1 \},
\end{equation*}
where each $Q_{\mathbf{j}}$ is called \emph{B\'ezier element}, with $h_{Q_{\mathbf{j}}}:=\operatorname{diam}\left( Q_{\mathbf{j}}\right)$. Let $h:=\max\{h_Q: Q\in\hat\mathcal M_0\}$, hence we denote $\hat{\mathcal{M}}_{0,h}=\hat{\mathcal{M}}_{0}$.

Throughout the manuscript we are going to rely on the following shape-regularity hypothesis, which allows us to assign $h_Q$ as the unique size of the element, without the necessity of dealing with the length of its edges separately. This hypothesis is implicitly used throughout the paper in any result involving the mesh size $h$.
\begin{assumption}\label{shape_regularity}
The family of meshes $\{\hat\mathcal M_{0,h}\}_h$ is assumed to be \emph{shape-regular}, that is, the ratio between the smallest edge of $Q\in\hat\mathcal M_{0,h}$ and its diameter $h_Q$ is uniformly bounded with respect to $Q$ and $h$.
\end{assumption}
\begin{remark}
The shape-regularity hypothesis implies that the mesh is \emph{locally-quasi uniform}, i.e. the ratio of the sizes of two neighboring elements is uniformly bounded (see \cite{BAZILEVS_1}).
\end{remark}

Let $\mathbf{I}:=\{\mathbf{i}=(i_1,\dots,i_d): 1\le i_\ell\le n_\ell \}$ be a set of multi-indices. For each $\mathbf{i}=(i_1,\dots,i_d)$, we define the set of \emph{multivariate B-splines}\\ $\{\hat{B}_{\mathbf{i},p}(\mathbf{\hat x})=\hat{B}_{i_1,p}(\hat x_1)\dots\hat{B}_{i_d,p}(\hat x_d): \mathbf{i}\in\mathbf{I}  \}.
$
Moreover, for an arbitrary B\'ezier element $Q_{\mathbf j}\in\hat {\mathcal M}_{0,h}$, we define its \emph{support extension} $\tilde Q_{\mathbf j}=\tilde I_{1,j_1}\times\dots\times\tilde I_{d,j_d}$,
where $\tilde I_{l,j_\ell}$ is the univariate support extension of the univariate case defined above. 

The \emph{multivariate spline space} in $\hat{\Omega}$ is defined as
$
S_{p}(\mathbf{\Xi})=\operatorname{span}\{\hat{B}_{\mathbf{i},p}(\mathbf{\hat x}):\mathbf{i}\in\mathbf{I} \},
$
which can also be seen as the space of piecewise multivariate polynomials of coordinate degree $p$ and with regularity across the B\'ezier elements given by the knots multiplicities. Note that $S_{p}(\mathbf{\Xi})=\bigotimes_{\ell=1}^d S_{p}(\Xi_\ell)$.
\begin{remark}
What has been said so far can be easily generalized to the case of Non-Uniform Rational B-Splines (NURBS) basis functions. See for instance \cite{Cottrell:2009:IAT:1816404}. 
\end{remark}
\begin{remark}
	The previous construction as well as what follows could be done in a more general setting considering different, but fixed, degrees in each Cartesian direction. In this case all the inequality constants appearing in the theoretical results would depend on the difference between the degrees, and would possibly explode if this difference is not kept bounded. Further generalizations to the anisotropic setting, either in terms of the mesh or in terms of the degree, would rely on the approximation theory in anisotropic Sobolev spaces (see, for instance, \cite{BEIRAODAVEIGA20121,10.2307/2156881}), These extensions are far from trivial, and out of the scope of this work.
	\end{remark}	

\subsection{Parametrization, mesh and approximation space for trimming domains}\label{47}
Let $\Omega_0\subset\R^d$ be the original domain before trimming. We assume that there exists a map $\mathbf{F}\in \left(S_{p^0}(\mathbf{\Xi}^0) \right)^d$ such that $\Omega_0=\mathbf{F}(\hat{\Omega}_0)$, for given degree $p^0$ and knot vector $\mathbf{\Xi}^0$. We define the (physical) \emph{B\'ezier mesh} as the image of the elements in $\hat{\mathcal{M}}_{0,h}$ through $\mathbf{F}$:
\begin{equation*}
\mathcal{M}_{0,h}:=\{K\subset\Omega: K=\mathbf{F}(Q), Q\in\hat{\mathcal{M}}_{0,h} \}.
\end{equation*}
We denote $h_K:=\operatorname{diam}\left(K\right)$ for each $K\in\mathcal M_{0,h}$.
To prevent the existence of singularities in the parametrization we make the following assumption.
\begin{assumption}\label{1}
The parametrization $\mathbf{F}:\hat{\Omega}_0\to\Omega_0$ is bi-Lipschitz. Moreover, $\restr{\mathbf{F}}{\overline{Q}}\in C^\infty(\overline{Q})$ for every $Q\in\hat{\mathcal{M}}_{0,h}$ and $\restr{\mathbf{F}^{-1}}{\overline{K}}\in C^\infty(\overline{K})$ for every $K\in\mathcal{M}_{0,h}$.
\end{assumption}
Some consequences of Assumption \ref{1} are the following.
\begin{enumerate}
\item $h_Q\approx h_K$, i.e. $\exists\ C_1>0, C_2>0$ such that $C_1 h_K\le h_Q \le C_2 h_K$;
\item $\exists\ C>0$ such that, $\forall\ Q\in\hat{ \mathcal{M}}_{0,h}$ such that $\mathbf{F}(Q)=K$, it holds\\ $\norm{D\mathbf{F}}_{L^{\infty}(Q)}\le C$ and $\norm{D\mathbf{F}^{-1}}_{L^{\infty}(K)}\le C$;
\item $\exists\ C_1>0, C_2>0$  such that  $\quad C_1\le\abs{\operatorname{det}(D\mathbf F(\hat {\bf x}))}\le C_2$,  for all $\hat {\bf x} \in \hat \Omega_0$.
\end{enumerate}
Let $\hat{V}_h=S_{p}(\mathbf{\Xi})$ be a refinement of $S_{p^0}(\mathbf{\Xi}^0)$ and define
\begin{equation*}
V_h=\operatorname{span}\{B_{\mathbf{i},p}(\mathbf{x}):=\hat{B}_{\mathbf{i},p}\circ\mathbf{F}^{-1}(\mathbf{x}):\mathbf{i}\in\mathbf{I} \},
\end{equation*}
where $\{\hat{B}_{\mathbf{i},p}: \mathbf{i}\in\mathbf{I} \}$ is a basis for $\hat{V}_h$.
Note that throughout this document $C$ will denote generic constants that may change at each occurrence, but that are always independent of the local mesh size.

Let us clarify the interpolation strategy we are going to rely upon in the rest of this manuscript. 
Given a function $u\in H^{s}(\Omega)$, $s\ge1$, we extend it using the Sobolev-Stein extension operator (see, for instance, Section 3.2 of~\cite{oswald}) $E:H^{s}(\Omega)\to H^{s}(\R^d)$ and denote its restriction to the un-trimmed domain as $\tilde u:=\restr{E(u)}{\Omega_0}$, for $u\in H^{s}(\Omega)$.
The spline quasi-interpolant operator (\cite{Buffa2016}) associated to the uncut mesh $\mathcal M_{0,h}$ is $\Pi_0: H^{s}(\Omega_0)\to V_h$. Hence, we are allowed to write $\norm{\Pi_0( u)}_{H^{s}(\Omega)}\le\norm{\Pi_0(\tilde u)}_{H^{s}(\Omega_0)}\le C \norm{\tilde u}_{H^{s}(\Omega_0)}\le C\norm{u}_{H^{s}(\Omega)}$ and, similarly, for every $0\le t\le s$, $\norm{u-\Pi_0( u)}_{H^t(\Omega)}\le \norm{\tilde u-\Pi_0( \tilde u)}_{H^t(\Omega_0)}\le C h^{s-t}\norm{\tilde u}_{H^s(\Omega_0)}\le C h^{s-t}\norm{u}_{H^s(\Omega)}$.

\section{The isogeometric formulation}\label{35}
At this point we suppose to trim $\Omega_0$ as explained in \eqref{48}, for simplicity, with $N=1$ , i.e. the new domain is $\Omega=\Omega_0\setminus\overline{\Omega}_1$. We denote the trimming curve as $\Gamma_{trim}=\partial\Omega\cap\partial\Omega_1$.
Let us consider the Poisson equation as model problem. Given $f\in L^2(\Omega)$, $g_D\in H^{\frac{1}{2}}(\Gamma_D)$ and $g_N\in H^{-\frac{1}{2}}(\Gamma_N)$, find $u:\Omega\to\R$ such that
\begin{equation}\label{3}
\begin{cases}
-\Delta u = f\qquad&\text{in}\;\Omega,\\
u=g_D\qquad&\text{on}\;\Gamma_D,\\
\displaystyle{\frac{\partial u}{\partial n}}=g_N\qquad&\text{on}\;\Gamma_N,
\end{cases}
\end{equation}
where $\Gamma_D\cup\Gamma_N=\Gamma=:\partial\Omega$ and $\mathring\Gamma_D\cap\mathring \Gamma_N=\emptyset$, and $\displaystyle{\frac{\partial u}{\partial n}}:=\nabla u\cdot \mathbf n$ is the normal derivative, with $\mathbf{n}$ the outward unit normal to $\Gamma$. Observe that, in general, $\Gamma_{trim}\cap\Gamma_{D}\ne\emptyset$.

Let us now develop and extend the notation introduced in Section \ref{4}, to adapt it to trimmed domains. 

The approximation space on the trimmed domain $\Omega$ is $\tilde{V}_h:=\operatorname{span}\{\restr{B_{\mathbf{i},\mathbf{p}}}{\Omega}:\mathbf{i}\in\mathbf{I} \}$. The new parametric B\'ezier mesh is
$
\hat{\mathcal{M}}_h=\{Q\in{\hat{\mathcal{M}}}_{0,h}: Q\cap \hat{\Omega}\ne\emptyset \},
$
where $\hat\Omega = \F^{-1}(\Omega)$. The physical mesh is
$
\mathcal{M}_h=\{\F(Q): Q\in\hat{\mathcal{M}}_h \}.
$
and the set of B\'ezier elements cut by the trimming curve is denoted as
$
\mathcal{G}_h=\{ K\in\mathcal{M}_h: \overline K\cap\Gamma_{trim}\ne\emptyset \}.
$

 For every $K\in \mathcal{M}_h$, let $h_K:=\operatorname{diam}(K)$, $h_{\max}:=\max_{K\in \mathcal{M}_h} h_K$ and $h_{\min}:=\min_{K\in \mathcal{M}_h} h_K$. We define $\mathsf{h}:\Omega\to\R^+$ to be the piecewise constant mesh-size function of $\mathcal M_h$ given by $\restr{\mathsf{h}}{K}:=h_K$. 

First of all, we make an assumption on how the mesh is cut by the boundary.
\begin{assumption}\label{mesh_assumptions}
There exists $C>0$ such that, $\forall\ h>0,\ \forall\ K\in\mathcal{M}_h$, it holds $\abs{\Gamma_K}\le C h_K^{d-1}$, where $\Gamma_K:=\Gamma_{D}\cap \overline K\ne\emptyset$.
\end{assumption}

We denote as $\hat\Gamma_{D} := \F^{-1}\left( \Gamma_{D}\right)$ and as $\hat {\mathbf{n}}$ its outward unit normal.

Since the point is to avoid a reparametrization and a remeshing of the trimmed domain, it is natural to see the analogy with fictitious domain methods, where the physical domain, with a possibly complicated topology, is immersed into a simpler, but unfitted, background mesh. Similarly to fictitious domain methods, we need to be able to impose essential boundary conditions when the mesh is not fitted with boundary. Following \cite{FERNANDEZMENDEZ20041257,STENBERG1995139}, we decide to employ Nitsche's method, which in its symmetric form reads as follows.

Find  $u_h\in \tilde{V}_h$ such that
\begin{equation}\label{12}
\begin{aligned}
\int_\Omega\nabla u_h\cdot \nabla v_h&-\int_{\Gamma_D}\frac{\partial u_h}{\partial n}v_h-\underbrace{\int_{\Gamma_D}u_h\frac{\partial v_h}{\partial n}}_\text{symmetry}+\underbrace{\beta\int_{\Gamma_D}\mathsf{h}^{-1}u_hv_h}_\text{stability}\\
=&\int_\Omega fv_h+\int_{\Gamma_N}g_Nv_h
\underbrace{-\int_{\Gamma_D}g_D\frac{\partial v_h}{\partial n}+\beta\int_{\Gamma_D}\mathsf{h}^{-1}g_D v_h}_\text{consistency},
\end{aligned}
\end{equation}
where $\beta>0$ is a penalization parameter. 

We define
\begin{equation*}
a_h(u_h,v_h):=\int_\Omega\nabla u_h\cdot \nabla v_h-\int_{\Gamma_D} \frac{\partial u_h}{\partial n}v_h-\int_{\Gamma_D} u_h \frac{\partial v_h}{\partial n}+\beta\int_{\Gamma_D}\mathsf{h}^{-1} u_h v_h.
\end{equation*}


Our main goal is to provide a minimal stabilization to make formulation \eqref{12} uniformly well-posed with respect to the mesh-size.

\section{Stability}\label{section_stability}
Firstly, we need to clarify what we actually mean by ``stability'' of the discrete variational problem \eqref{12}. We introduce the following mesh-dependent scalar product 
\begin{equation*}
\left( u_h,v_h \right)_{1,h,\Omega}:=\int_{\Omega} \nabla u_h \cdot\nabla v_h +\int_{\Gamma_D} \mathsf{h}^{-1}u_h v_h,
\end{equation*}
which induces the discrete norm
\begin{equation}\label{norm_nitsche}
\norm{u_h}^2_{1,h,\Omega}:=\norm{\nabla u_h}^2_{L^2(\Omega)}+\norm{\mathsf{h}^{-\frac{1}{2}}u_h}_{L^2(\Gamma_D)}^2.
\end{equation}
	\begin{definition}\label{definition_stability}
	Problem \eqref{12} is stable if there exist $\overline\beta>0$ and $\alpha>0$ such that for every $\beta\ge\overline\beta$, for every $h>0$ it holds that
\[
\alpha\norm{u_h}^2_{1,h,\Omega}\le a_h(u_h,u_h)\qquad\qquad\qquad\forall\ u_h\in\tilde V_h,
\]
and for every fixed $\beta \ge \overline{\beta}$ there exists $\gamma > 0$ such that for every $h > 0$ it holds that 
\[
a_h(u_h,v_h)\le \gamma \norm{u_h}_{1,h,\Omega}\norm{v_h}_{1,h,\Omega}\quad\ \forall\ u_h,v_h\in\tilde V_h.
\]
	\end{definition}
\begin{remark}
The main point of Definition~\ref{definition_stability} is to find $\overline\beta$, $\alpha$ and $\gamma$ that do not depend on the trimming configuration. In the following lines we will show with a numerical example that the formulation~\eqref{12} is not stable.	
\end{remark}
	\begin{remark}\label{remark_beta}
In Definition~\ref{definition_stability} we have followed \cite{STENBERG1995139}. Note that the coercivity constant only depends on $\overline\beta$, while the continuity constant depends on the penalization parameter $\beta$, and in particular it grows with $\beta$. This dependence of the constant on $\beta$ also occurs in Theorem~\ref{apriori_error_estimate} and in Proposition~\ref{prop7.4}. In practice, $\beta$ has to be chosen large enough (i.e. larger than $\overline\beta$), but as close as possible to $\overline \beta$, to avoid that the continuity constant deteriorates.
	\end{remark}
\begin{proposition}
	If for all $K\in\mathcal G_h$ we have $\Gamma_K=\emptyset$, then problem \eqref{12} is stable.
\end{proposition}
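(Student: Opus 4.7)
The plan is to follow the classical Nitsche stability argument and to observe that the hypothesis $\Gamma_K = \emptyset$ for every $K \in \mathcal{G}_h$ places $\Gamma_D$ entirely on untrimmed (fitted) elements, which is exactly the situation in which the standard inverse trace estimate survives with a constant independent of the trimming.

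The key technical ingredient is the discrete inverse trace inequality
\[
\|\mathsf{h}^{1/2}\, \partial_n u_h\|_{L^2(\Gamma_D)}^2 \le C_{\text{inv}}\, \|\nabla u_h\|_{L^2(\Omega)}^2 \qquad \forall\, u_h \in \tilde V_h,
\]
with $C_{\text{inv}}$ independent of $h$. Let $\mathcal{M}_h^D := \{K \in \mathcal{M}_h : \Gamma_K \neq \emptyset\}$. By hypothesis, $\mathcal{M}_h^D \cap \mathcal{G}_h = \emptyset$, so every element touching $\Gamma_D$ is uncut; in particular $K \cap \Omega = K$ and $\Gamma_D \cap \overline K$ lies on an entire face of $K$ (using that $\Gamma_D$ is part of $\partial \Omega_0 \subset \partial \Omega$ away from $\Gamma_{trim}$). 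On such a shape-regular, uncut element, the classical polynomial inverse trace estimate (pulled back to $\hat Q$ via $\F$, using Assumption~\ref{1}) gives $\|h_K^{1/2} \partial_n u_h\|_{L^2(\Gamma_K)}^2 \le C \|\nabla u_h\|_{L^2(K)}^2$ with $C$ depending only on $p$ and the shape regularity constant. Summing over $K \in \mathcal{M}_h^D$ yields the global inequality.

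Once this is established, coercivity follows by a standard Young's inequality argument. Writing
\[
a_h(u_h,u_h) = \|\nabla u_h\|_{L^2(\Omega)}^2 - 2\int_{\Gamma_D} \frac{\partial u_h}{\partial n}\, u_h + \beta\, \|\mathsf{h}^{-1/2} u_h\|_{L^2(\Gamma_D)}^2,
\]
and bounding the cross term for any $\epsilon > 0$ by
\[
2\left| \int_{\Gamma_D} \frac{\partial u_h}{\partial n}\, u_h \right| \le \epsilon\, \|\mathsf{h}^{1/2} \partial_n u_h\|_{L^2(\Gamma_D)}^2 + \epsilon^{-1}\, \|\mathsf{h}^{-1/2} u_h\|_{L^2(\Gamma_D)}^2,
\]
we obtain, via the inverse trace inequality,
\[
a_h(u_h,u_h) \ge (1 - \epsilon C_{\text{inv}})\, \|\nabla u_h\|_{L^2(\Omega)}^2 + (\beta - \epsilon^{-1})\, \|\mathsf{h}^{-1/2} u_h\|_{L^2(\Gamma_D)}^2.
\]
Choosing $\epsilon = 1/(2 C_{\text{inv}})$ and then $\overline \beta := 2 C_{\text{inv}} + 1$ yields $\alpha = 1/2$ for all $\beta \ge \overline \beta$, independently of $h$ and of the trimming configuration.

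Continuity is routine: by Cauchy--Schwarz, each bulk and boundary term of $a_h(u_h,v_h)$ is controlled by $\|u_h\|_{1,h,\Omega} \|v_h\|_{1,h,\Omega}$ after a single application of the same inverse trace inequality to the Nitsche consistency terms $\int_{\Gamma_D} \partial_n u_h \cdot v_h$ and $\int_{\Gamma_D} u_h \cdot \partial_n v_h$, producing a constant $\gamma = \gamma(\beta, C_{\text{inv}})$ that grows linearly with $\beta$ but is otherwise independent of $h$ and of how $\Omega$ is trimmed. The only potentially delicate point is confirming that the trace inequality on the fitted face $\Gamma_K$ is genuinely uniform in $h$ after pulling back through $\F$, which is guaranteed by Assumption~\ref{1} together with the shape-regularity Assumption~\ref{shape_regularity}; this, rather than any new trimming-related argument, is the main (and in fact classical) technical point.
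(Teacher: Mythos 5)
Your argument is correct and is precisely the classical Nitsche stability proof that the paper itself invokes by citing Stenberg (1995) without reproducing it: under the hypothesis every element meeting $\Gamma_D$ is uncut, so the standard element-wise inverse trace inequality holds with constants depending only on $p$, the shape regularity, and $\F$, and coercivity and continuity follow by Young's and Cauchy--Schwarz inequalities exactly as you write. No discrepancy with the paper's (delegated) proof.
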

\begin{proof}
We refer the reader to~\cite{STENBERG1995139}.	
\end{proof}	
The following numerical experience shows that there exists a trimming configuration for which the formulation~\eqref{12} is not stable according to Definition~\ref{definition_stability}. In particular we show that for every fixed $\beta$ the continuity constant $\gamma$ may be arbitrarily large, for a given $h>0$. First, we notice that if $\gamma_\beta$ is the continuity constant corresponding to $\beta$, then $\gamma_\beta>\gamma_1$ for every $\beta>1$. So, we fix $\beta=1$ and show that $\gamma_1$ can be arbitrarily large.

Let us consider the following eigenvalue problem. Find $u_h\in\tilde{V}_h\setminus \{ 0\}$ and $\lambda_h\in \mathbb{R}$ such that
\begin{equation}
\int_\Omega\nabla u_h\cdot \nabla v_h-\int_{\Gamma_D} \frac{\partial u_h}{\partial n}v_h-\int_{\Gamma_D} u_h \frac{\partial v_h}{\partial n}+\int_{\Gamma_D}\mathsf{h}^{-1} u_h v_h= \lambda_h (u_h, v_h)_{1,h,\Omega}\quad\forall\ v_h\in \tilde{V}_h.\label{GEPbis}
\end{equation}
As the problem is symmetric, the continuity constant $\gamma_1$ equals the maximum eigenvalue of~\eqref{GEPbis}.
%
%
%
Let us consider $\Omega_0=(0,1)^2$ and as trimmed domain $\Omega = (0,1)\times(0,0.757)$. We fix $h=2^{-5}$ as mesh size and $p=3$ as degree. We construct a sequence of discrete spaces $\left(\tilde V_{h,\eps}\right)_\eps$ of degree $p$ and of class $C^2$ at the internal knots, starting from the uniform knot vectors $\Xi_x$, $\Xi_{y}$ and substituting in the latter the knot $0.75$ with $\overline\xi=0.757-\eps$, see Figure~\ref{domain_eps}. Basically, the horizontal knot line $\{(x,y):y=0.75\}$ is replaced by $\{(x,y):y=\overline\xi\}$, which is such that the smaller $\eps>0$ is, the closer to the trimming curve it becomes. 

In Figure \ref{fig_eigs_bad} we can see the dependence of the spectrum of \eqref{GEPbis} on the magnitude of $\varepsilon$. In particular, the magnitude of the largest generalized eigenvalue goes to infinity as $\varepsilon$ goes to $0$, implying that the discrete formulation \eqref{12} is not stable, as the continuity constant can be made arbitrarily large by reducing $\eps$.
\begin{figure}[!htbp]
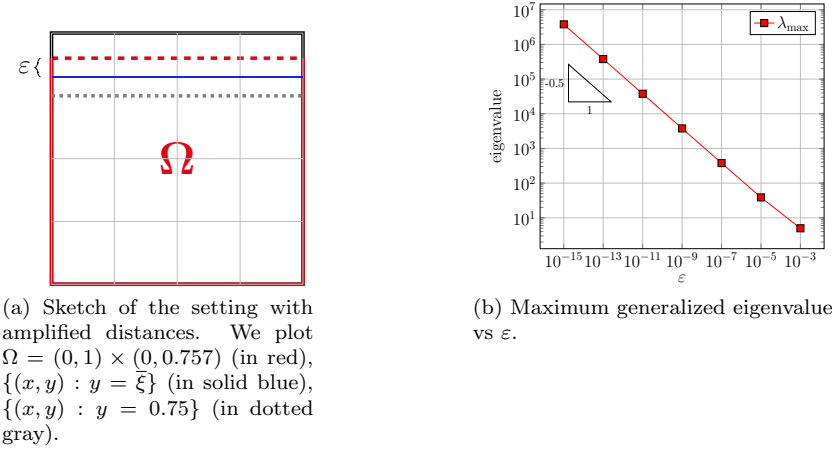

\centering
\subfloat[][Sketch of the setting with amplified distances. We plot $\Omega=(0,1)\times(0,0.757)$ (in red), $\{(x,y): y=\overline\xi\}$ (in solid blue), $\{(x,y): y=0.75\}$ (in dotted gray).\label{domain_eps}]
{  
  \includestandalone[width=0.3\textwidth,keepaspectratio=true]{mesh_epsilon}
  }
\hspace{2cm}
  \subfloat[][Maximum generalized eigenvalue vs $\varepsilon$.  \label{fig_eigs_bad}]
{
  \includestandalone[width=0.35\textwidth,keepaspectratio=true]{lambda_nostab}
  }
	\caption{Testing the lack of stability of formulation \eqref{40} with respect to trimming. }
\end{figure}
%
Going through the proof of stability, we clearly miss a discrete trace inequality which is uniform with respect to any mesh-trimming curve configuration, namely:
\begin{equation*}
\norm{\mathsf{h}^{\frac{1}{2}}\frac{\partial v_h}{\partial n}}_{L^2(\Gamma_K)}\le C \norm{\nabla v_h}_{L^2( K\cap\Omega)},
\end{equation*}
where $C$ does not depend neither on $K$ nor on $K\cap\Omega$.

At this point, in order to be able to deal with a well-posed, hence stable, problem we want to find a way to improve this discrete trace inequality, where the constant does not depend on how the trimmed boundary intersects the mesh.

\section{A new stabilization technique}\label{36}
The goal of this section is to present a new stabilization technique for the problem \eqref{12}. Our construction is inspired by the work of J. Haslinger and Y. Renard in \cite{Renard}.

Let us partition the elements of the B\'ezier mesh into two disjoint sub-families. 
\begin{definition}\label{def_goodandbad}
Let $\theta\in (0,1]$ and $Q\in\hat{\mathcal{M}}_h$. We say that $Q$ is a \emph{good element} if
\begin{equation}\label{good_bad_def}
\frac{\abs{\hat\Omega\cap Q}}{\abs{Q}}\ge\theta.
\end{equation}
Otherwise, $Q$ is a \emph{bad element}. Thanks to the regularity Assumption~\ref{1} on $\F$, this classification on the parametric elements induces naturally a classification on the physical elements. $\mathcal M_h^g$ stands for the collection of the good physical B\'ezier elements and $\mathcal M_h^b$ for the one of the bad physical elements. Note that $\mathcal M_h \setminus \mathcal G_h \subseteq \mathcal M_h^g$ and $\mathcal M_h^b\subseteq \mathcal G_h$. We denote the set of \emph{neighbors} of $K$ as:
	\begin{equation*}
	\mathcal N(K):=\{K'\in\mathcal M_h: \operatorname{dist}\left(K,K'\right)\le C \restr{\mathsf h}{K}\}\setminus\{K\},
	\end{equation*}
	where $C$ does not depend on the mesh size.
\end{definition}
The following assumption is not restrictive since it holds true if the mesh is sufficiently refined.
\begin{assumption}\label{assumption_neigh}
We assume that for any $K\in\mathcal M_h^b$, $\mathcal N (K) \cap \mathcal M _h^g\ne\emptyset$.
\end{assumption}


In what follows, we will use Assumption~\ref{assumption_neigh} to construct a stable representation of the normal flux of discrete functions. Let us assume that there exists an operator 
\begin{equation*}
R_h:\tilde{V}_h\to L^2(\Gamma_D)
\end{equation*}
which approximates the normal derivative on $\Gamma_D$ in a sense that will be specified. 
We propose the following stabilized formulation of problem \eqref{12}.

Find $u_h\in\tilde{V}_h$ such that
\begin{equation}\label{40}
\overline{a}_h(u_h,v_h)=\int_\Omega fv_h+\int_{\Gamma_N}g_Nv_h-\int_{\Gamma_D}g_DR_h(v_h)+\beta\int_{\Gamma_D}\mathsf{h}^{-1}g_D v_h\qquad\forall\; v_h\in \tilde{V}_h,
\end{equation}
where 
\begin{equation*}
\overline{a}_h(u_h,v_h):=\int_\Omega\nabla u_h\cdot \nabla v_h-\int_{\Gamma_D}R_h( u_h)v_h-\int_{\Gamma_D}u_h R_h(v_h)+\beta\int_{\Gamma_D}\mathsf{h}^{-1}u_hv_h.
\end{equation*}

\begin{theorem}\label{thm_well_posedness}
Suppose the following \emph{stability property} is satisfied: there exists a uniform $C>0$ such that for every $K\in\mathcal G_h$
\begin{equation}\label{41}
\norm{\mathsf{h}^{\frac{1}{2}}R_h(v_h)}_{L^2(\Gamma_K)}\le C\norm{\nabla v_h}_{L^2(\Omega\cap K')}\qquad\forall\; v_h\in \tilde{V}_h,
\end{equation}
where $K'=K$ if $K\in\mathcal M_h^g$, otherwise $K'\in\mathcal{N}(K)\cap \mathcal M_h^g$. Then problem \eqref{40} is stable in the sense of Definition~\ref{definition_stability} (modified accordingly).
\end{theorem}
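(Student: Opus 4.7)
The plan is to establish coercivity and continuity of $\overline{a}_h$ with respect to the mesh-dependent norm $\norm{\cdot}_{1,h,\Omega}$, using the stability property \eqref{41} as the replacement for the discrete trace inequality that was missing in the analysis of the unstabilized form $a_h$.

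For coercivity I would start from
\[
\overline{a}_h(u_h,u_h) = \norm{\nabla u_h}_{L^2(\Omega)}^2 - 2\int_{\Gamma_D} R_h(u_h)\, u_h + \beta\norm{\mathsf{h}^{-1/2} u_h}_{L^2(\Gamma_D)}^2,
\]
and the whole point is to absorb the mixed term. Splitting $\Gamma_D=\bigcup_{K\in\mathcal G_h}\Gamma_K$, applying Cauchy--Schwarz element-wise and then invoking \eqref{41} on each $\Gamma_K$, I would obtain
\[
\norm{\mathsf{h}^{1/2} R_h(u_h)}_{L^2(\Gamma_D)}^2 \le C\sum_{K\in\mathcal G_h}\norm{\nabla u_h}_{L^2(\Omega\cap K')}^2 \le C_\star\norm{\nabla u_h}_{L^2(\Omega)}^2,
\]
where the second inequality is the key combinatorial step (see below). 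A Cauchy--Schwarz followed by Young's inequality with parameter $\varepsilon=1/(2C_\star)$ then yields
\[
\overline{a}_h(u_h,u_h) \ge \tfrac{1}{2}\norm{\nabla u_h}_{L^2(\Omega)}^2 + (\beta - 2C_\star)\norm{\mathsf{h}^{-1/2} u_h}_{L^2(\Gamma_D)}^2,
\]
so choosing $\overline{\beta}$ strictly larger than $2C_\star$ gives an $\alpha>0$ independent of $h$ and of the trimming configuration, as required.

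For continuity I would bound the four terms of $\overline{a}_h(u_h,v_h)$ by Cauchy--Schwarz: the volumetric term and the penalty term are immediate, while the two flux terms use \eqref{41} exactly as above to give
\[
\left|\int_{\Gamma_D} R_h(u_h)\, v_h\right| \le C_\star^{1/2}\norm{\nabla u_h}_{L^2(\Omega)}\norm{\mathsf{h}^{-1/2} v_h}_{L^2(\Gamma_D)} \le C_\star^{1/2}\norm{u_h}_{1,h,\Omega}\norm{v_h}_{1,h,\Omega},
\]
and symmetrically for the other flux term. Summing yields a continuity constant of the form $\gamma = 1 + 2C_\star^{1/2} + \beta$, which depends on $\beta$ consistently with Remark~\ref{remark_beta}.

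The step that genuinely requires care, rather than routine manipulation, is the passage $\sum_{K\in\mathcal G_h}\norm{\nabla u_h}_{L^2(\Omega\cap K')}^2 \le C\norm{\nabla u_h}_{L^2(\Omega)}^2$. For a good cut element one takes $K'=K$ and there is nothing to do; but for a bad element $K\in\mathcal M_h^b$ the companion $K'\in\mathcal N(K)\cap\mathcal M_h^g$ is assigned by Assumption~\ref{assumption_neigh}, and the same good element $K'$ may serve as the companion of several bad elements. The estimate therefore requires a finite-overlap argument: by the shape-regularity Assumption~\ref{shape_regularity} (and the resulting local quasi-uniformity), the cardinality of $\{K\in\mathcal M_h^b : K'\in\mathcal N(K)\}$ is uniformly bounded in $h$, so each $\norm{\nabla u_h}_{L^2(\Omega\cap K')}^2$ is counted at most a bounded number of times and the sum is controlled by $\norm{\nabla u_h}_{L^2(\Omega)}^2$ with a constant independent of the trimming. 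Everything else in the proof is a standard Nitsche-type argument.
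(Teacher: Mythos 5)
Your proof is correct and follows essentially the same route as the paper's: Cauchy--Schwarz plus the stability property \eqref{41} for continuity, and Young's inequality to absorb the mixed flux term into the gradient and penalty terms for coercivity, with $\overline\beta$ chosen above a threshold determined by the stability constant. The only difference is that you make explicit the finite-overlap argument needed to pass from $\sum_{K\in\mathcal G_h}\norm{\nabla u_h}^2_{L^2(\Omega\cap K')}$ to $C\norm{\nabla u_h}^2_{L^2(\Omega)}$ (the same good neighbour $K'$ may be shared by several bad elements), a step the paper uses implicitly; this is a welcome clarification rather than a departure.
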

\begin{proof}
For the continuity, let $u_h,v_h\in\tilde V_h$ and estimate
\begin{equation*}
\begin{aligned}
&\abs{\overline a_h(u_h,v_h)}\le  \norm{\nabla u_h}_{L^2(\Omega)}\norm{\nabla v_h}_{L^2(\Omega)}+\norm{\mathsf{h}^{\frac{1}{2}}R_h(u_h)}_{L^2(\Gamma_D)}\norm{\mathsf{h}^{-\frac{1}{2}}v_h}_{L^2(\Gamma_D)}\\
&+\norm{\mathsf{h}^{\frac{1}{2}}R_h(v_h)}_{L^2(\Gamma_D)}\norm{\mathsf{h}^{-\frac{1}{2}}u_h}_{L^2(\Gamma_D)} 
+ \beta \norm{\mathsf{h}^{-\frac{1}{2}}u_h}_{L^2(\Gamma_D)}\norm{\mathsf{h}^{-\frac{1}{2}}v_h}_{L^2(\Gamma_D)}\\
\le & \norm{u_h}_{1,h,\Omega}\norm{v_h}_{1,h,\Omega}+C\norm{\nabla u_h}_{L^2(\Omega)}\norm{v_h}_{1,h,\Omega}
+C\norm{\nabla v_h}_{L^2(\Omega)}\norm{u_h}_{1,h,\Omega}\\
&+\beta\norm{u_h}_{1,h,\Omega}\norm{v_h}_{1,h,\Omega}
\le C\norm{u_h}_{1,h,\Omega}\norm{v_h}_{1,h,\Omega},
\end{aligned}
\end{equation*}
where we employed first Cauchy-Schwarz inequality, the definition of the norm \eqref{norm_nitsche} and the stability property \eqref{41}. Take $u_h\in\tilde V_h$. Using Young inequality, with $\delta>0$, and, again, the stability property \eqref{41}, we obtain:
\begin{equation*}
\begin{aligned}
a_h(u_h,u_h)
\ge & \norm{\nabla u_h}_{L^2(\Omega)}^2 - \frac{1}{\delta}\norm{\mathsf{h}^{\frac{1}{2}}R_h(u_h)}^2_{L^2(\Gamma_D)}-\delta\norm{\mathsf{h}^{-\frac{1}{2}}u_h}^2_{L^2(\Gamma_D)}\\
 &+\beta \norm{\mathsf{h}^{-\frac{1}{2}}u_h}_{L^2(\Gamma_D)}^2
\ge  \left(1-\frac{C}{\delta}\right)\norm{\nabla u_h}_{L^2(\Omega)}^2+\left(\beta-\delta \right)\norm{\mathsf{h}^{-\frac{1}{2}}u_h}_{L^2(\Gamma_D)}^2,
\end{aligned}
\end{equation*}
from which we deduce the coercivity, provided $C<\delta<\beta$.
\end{proof}

\begin{remark}
In order for the solution of \eqref{40} to be a good approximation of $u$, it is clear that we will also need to quantify the error between $R_h(u_h)$ and $\frac{\partial u}{\partial n}$. This fact will be addressed in the next section.
\end{remark}

\section{Construction of the stabilization operator} \label{sec:construction}
The definition of the operator $R_h$ is not unique. As already observed, we seek for a stable approximation of the normal derivative on the trimmed part of the boundary, namely on $\Gamma_K$ for every $K\in\mathcal G_h$. Here, we propose two different constructions of such an operator.
\begin{itemize}
\item a \emph{stabilization in the parametric domain}: for each $K\in\mathcal M_h^b$ we take the (unique) polynomial extension of the pull-back of the functions of $\tilde V_h$ from $Q'=\F^{-1}(K')$ to $Q=\F^{-1}(K)$, where $K'$ is a good neighbor;
\item a \emph{stabilization in the physical domain}: for each $K\in\mathcal M_h^b$, we first $L^2$-project the spline functions restricted to the good neighbor $K'$ onto the polynomial space $\mathbb{Q}_p(K')$, then we take their (unique) polynomial extension up to $K$.
\end{itemize}

\begin{definition}[Stabilization in the parametric domain]\label{stabilization_parametric}
We define the operator $R_h$ locally as $\restr{R_h(v_h)}{K}:=R_K(v_h)$ $\;\forall\ K\in\mathcal{G}_h$, $\forall\ v_h\in\widetilde{V}_h$, where
\begin{itemize}
\item if $K\in\mathcal M_h^g$,
\begin{equation*}
 R_K(v_h):=\frac{\partial \restr{v_h}{K}}{\partial n};
 \end{equation*}
 \item if $K\in\mathcal M_h^b$, $K'\in\mathcal{N}(K)\cap \mathcal M_h^g$,
 \begin{equation*}
 R_K(v_h):=\frac{\partial\left( \mathcal{E}\left(\restr{\hat{v}_h}{Q'}\right)\circ \mathbf{F}^{-1}\right)}{\partial n},
 \end{equation*}
where $\mathcal{E}: \mathbb{Q}_p(Q')\to  \mathbb{Q}_p(Q'\cup Q)$ is the polynomial natural extension.
\end{itemize}
\end{definition}

\begin{definition}[Stabilization in the physical domain]\label{stabilization_physical}
An alternative stabilization operator can be defined by using the $L^2$-projection in the physical domain.
We define the operator $R_h$ locally as $\restr{R_h(v_h)}{K}:=R_K(v_h)$ $\forall\ K\in\mathcal{G}_h$, $\forall\ v_h\in\widetilde{V}_h$:
\begin{itemize}
\item if $K\in\mathcal M_h^g$,
\begin{equation*}
 R_K(v_h):=\frac{\partial \restr{v_h}{K}}{\partial n};
 \end{equation*}
 \item if $K=\mathbf{F}(Q)\in\mathcal M_h^b$, $K'\in\mathcal{N}(K)\cap \mathcal M_h^g$,
 \begin{equation*}
 R_K(v_h):=\frac{\partial\left( \mathcal{E}\left( P(\restr{v_h}{K'})\right)\right)}{\partial n},
 \end{equation*}
where $P: L^2\left(K'\right)\to\mathbb{Q}_p\left(K'\right)$ is the $L^2$-orthogonal projection
and\\ $\mathcal{E}: \mathbb{Q}_p(K')\to  \mathbb{Q}_p(K'\cup K)$ is the polynomial natural extension.
\end{itemize}
\end{definition}

\begin{remark}
Note that in the trivial case where $\F = \operatorname{\bf{Id}}$, the $L^2$-projection $P$, restricted to $\tilde V_h$, reduces to the identity operator and the two stabilizations coincide.
\end{remark}
\subsection{Properties of the stabilization in the parametric domain}
We are now up to verify if our choice of $R_h$ verifies the stability property \eqref{41}. Its proof relies on a series of quite technical results that are reported in the Appendix.

\begin{theorem}\label{theorem_stability}The stability property \eqref{41} holds for $R_h$ defined as in Definition \ref{stabilization_parametric}, i.e., there exists $C>0$ such that for every $K\in\mathcal G_h$
\begin{equation*}
\norm{\mathsf{h}^{\frac{1}{2}} R_h(v_h)}_{L^2(\Gamma_K)}\le C\norm{\nabla v_h}_{L^2(\Omega\cap K')}\qquad\forall\; v_h\in \tilde{V}_h,
\end{equation*}
where $K'=K$ if $K\in\mathcal M_h^g$, otherwise $K'\in\mathcal{N}(K)\cap \mathcal M_h^g$.
\end{theorem}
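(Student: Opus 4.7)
The plan is to reduce everything to the parametric domain via $\F^{-1}$ so that one only has to bound derivatives of polynomials on B\'ezier cells, and then to invoke uniform polynomial inverse/trace inequalities and cell-to-cell norm equivalences whose constants depend only on $p$, $d$, the shape-regularity of $\hat \mathcal{M}_{0,h}$, the bound on $|\Gamma_K|$ in Assumption~\ref{mesh_assumptions}, and the parameter $\theta$ in~\eqref{good_bad_def}. I work with $\hat v_h=v_h\circ\F$, so that by Assumption~\ref{1},
\[
\|\mathsf{h}^{1/2}R_K(v_h)\|_{L^2(\Gamma_K)}^2 \le C\, h_Q\,\|\nabla_{\hat{\mathbf x}} q\|_{L^2(\hat\Gamma_K)}^2,
\]
where $q\in\mathbb Q_p(Q)$ equals $\hat v_h|_Q$ in the good case, and the natural polynomial extension $\mathcal E(\hat v_h|_{Q'})|_Q$ in the bad case, and $\hat\Gamma_K=\F^{-1}(\Gamma_K)\subset\overline Q$.

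First I would establish a general polynomial trace-type estimate valid on an arbitrary (possibly cut) B\'ezier cell: for any $q\in\mathbb Q_p(Q)$,
\[
h_Q\|\nabla_{\hat{\mathbf x}} q\|_{L^2(\hat\Gamma_K)}^2\le C\|\nabla_{\hat{\mathbf x}} q\|_{L^2(Q)}^2.
\]
This follows from the $L^\infty$--$L^2$ norm equivalence on $\mathbb Q_p(Q)$ together with Assumption~\ref{mesh_assumptions}, which bounds $|\hat\Gamma_K|\le C h_Q^{d-1}$. Applied in the case $K\in\mathcal M_h^g$, it yields the desired bound once combined with the good-element polynomial equivalence $\|\nabla_{\hat{\mathbf x}} \hat v_h\|_{L^2(Q)}\le C\|\nabla_{\hat{\mathbf x}} \hat v_h\|_{L^2(\hat\Omega\cap Q)}$ (whose constant depends only on $\theta$), after which pushing the right-hand side forward through $\F$ recovers $\|\nabla v_h\|_{L^2(\Omega\cap K)}$.

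In the case $K\in\mathcal M_h^b$, the same trace-type estimate applied to $q=\mathcal E(\hat v_h|_{Q'})$ gives $h_Q\|\nabla_{\hat{\mathbf x}} q\|_{L^2(\hat\Gamma_K)}^2\le C\|\nabla_{\hat{\mathbf x}} q\|_{L^2(Q)}^2$, and I would then chain two equivalences. The first is a neighbor-to-neighbor estimate $\|\nabla_{\hat{\mathbf x}} q\|_{L^2(Q)}\le C\|\nabla_{\hat{\mathbf x}} q\|_{L^2(Q')}$ for $q\in\mathbb Q_p(Q\cup Q')$, which holds uniformly by a scaling argument on finite-dimensional polynomial spaces, using the definition of $\mathcal N(K)$ together with the local quasi-uniformity implied by Assumption~\ref{shape_regularity}. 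The second is, since $q=\hat v_h$ on $Q'$ and $Q'\in\mathcal M_h^g$, the good-element equivalence $\|\nabla_{\hat{\mathbf x}} \hat v_h\|_{L^2(Q')}\le C\|\nabla_{\hat{\mathbf x}} \hat v_h\|_{L^2(\hat\Omega\cap Q')}$ used above. Pushing forward via $\F$ again then completes the bound.

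The main obstacle I anticipate is precisely the good-element polynomial norm equivalence $\|\nabla_{\hat{\mathbf x}} \hat v_h\|_{L^2(Q)}\le C\|\nabla_{\hat{\mathbf x}} \hat v_h\|_{L^2(\hat\Omega\cap Q)}$: the measure condition $|\hat\Omega\cap Q|\ge\theta|Q|$ alone is not enough if $\hat\Omega\cap Q$ degenerates into a long thin sliver, so one also needs some control on the shape of $\hat\Omega\cap Q$. I expect this to be handled by a technical lemma in the Appendix that combines~\eqref{good_bad_def} with Assumption~\ref{mesh_assumptions} (possibly together with an additional regularity hypothesis on $\Gamma_{trim}$) to reduce to a compact family of reference configurations, on which a Markov-type inequality produces a constant independent of both the mesh size and the trimming.
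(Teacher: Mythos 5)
Your proof follows essentially the same route as the paper: pull back to the parametric domain, use H\"older plus the bound $|\hat\Gamma_D\cap Q|\le Ch^{d-1}$ from Assumption~\ref{mesh_assumptions} for the trace step, a polynomial neighbor-to-neighbor $L^\infty$ equivalence for the extension step, and a volume-fraction-to-whole-cell norm equivalence on the good element. These are exactly the paper's Lemma~\ref{51} and Lemma~\ref{50}, and the chaining is the same (the paper phrases it as an $L^2(\Gamma_K)$-to-$L^2(\Omega\cap K')$ bound for the extended function and applies it to the derivative components, which is equivalent to your gradient-level formulation).

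The one point to correct is the ``main obstacle'' you anticipate: the good-element equivalence $\|q\|_{L^\infty(Q)}\le Ch^{-d/2}\|q\|_{L^2(S)}$ for $q\in\mathbb Q_p$ and $|S|\ge\theta|Q|$ does \emph{not} require any control on the shape of $S=\hat\Omega\cap Q$, and no additional regularity hypothesis on $\Gamma_{trim}$ is invoked. This is Lemma~\ref{50}, quoted from Proposition~1 of the reference \cite{lozinski}; it holds for arbitrary measurable subsets of prescribed relative measure precisely because $\mathbb Q_p$ is finite-dimensional: on the unit sphere of $\mathbb Q_p(Q)$ (in the $L^\infty$ norm, after rescaling to the reference cell) the measure of the sublevel set $\{|q|\le\varepsilon\}$ tends to zero uniformly as $\varepsilon\to 0$, so $\|q\|_{L^2(S)}^2\ge\varepsilon^2\bigl(|S|-|\{|q|\le\varepsilon\}|\bigr)$ is bounded below once $\varepsilon$ is chosen small enough depending only on $\theta$ and $p$. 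A long thin sliver is therefore harmless for polynomials (it would be a genuine problem only for a space without such a uniform sublevel-set estimate). With that clarification your argument is complete and matches the paper's.
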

\begin{proof}
Fixed $K\in\mathcal G_h$, it is enough to prove
\begin{equation*}
\norm{\mathsf{h}^{\frac{1}{2}} \overline v_h}_{L^2(\Gamma_K)}\le C \norm{v_h}_{L^2(\Omega\cap K')},
\end{equation*}
for $v_h\in\tilde V_h$ such that $\restr{\overline v_h}{K}:=\mathcal{E}\left(\restr{\hat{v}_h}{Q'}\right)\circ \mathbf{F}^{-1}$, where $\mathcal E:\mathbb Q_p(Q')\to \mathbb Q_p(Q'\cup Q)$ and $K=\mathbf F(Q)$, $K'=\mathbf F (Q')\in\mathcal{N}(K)\cap \mathcal M_h^g$. We can restrict ourselves to the case $K\in\mathcal M_h^b$ with good neighbor $K'$. It holds:
\begin{equation}
\begin{aligned}
\norm{\overline v_h}^2_{L^2(\Gamma_K)}=&\int_{\Gamma_K}\abs{\overline v_h}^2\mathrm{d}S=\int_{\mathbf{F}^{-1}(\Gamma_K)}\abs{\hat{v}_h}^2\abs{\operatorname{det}\left(D\mathbf{F}\right)}\norm{D\mathbf{F}^{-1}\bf{\hat{n}}}\mathrm{d}\hat{S}\\
\le &C \int_{\mathbf{F}^{-1}\left(\Gamma_K \right)}\abs{\hat  v_h}^2\mathrm{d}\hat S= C\norm{\hat v_h}^2_{L^2(\hat\Gamma_D\cap Q)},\label{-1}
\end{aligned}
\end{equation}
where  we have used $\mathbf{F}^{-1}(\Gamma_K)=\mathbf{F}^{-1}(\Gamma_D)\cap \mathbf{F}^{-1}(K)= \hat\Gamma_D\cap Q$, because $\mathbf{F}$ preserves boundaries (as homeomorphisms do). We then have:
\begin{equation*}
\begin{aligned}
\norm{\hat v_h}_{L^2(\hat\Gamma_D\cap Q)} \le  \abs{\hat\Gamma_D\cap Q}^{\frac{1}{2}}\norm{\hat v_h}_{L^\infty(\hat\Gamma_D\cap Q)}
\le \abs{\hat\Gamma_D\cap Q}^{\frac{1}{2}}\norm{\hat v_h}_{L^\infty(Q)}.
\end{aligned}
\end{equation*}
In the first inequality we have used H\"{o}lder inequality. Now, we employ Lemma~\ref{51} and  Assumption~\ref{mesh_assumptions}:
\begin{equation*}
\norm{\hat v_h}^2_{L^2(\hat\Gamma_D\cap Q)} \le C\abs{\hat\Gamma_D\cap Q}^{\frac{1}{2}}\norm{\hat v_h}_{L^\infty(Q')} \le C h_{}^{\frac{d-1}{2}}\norm{\hat v_h}_{L^\infty(Q')}.
\end{equation*}

At this point, notice that we can use Lemma~\ref{50} because $\frac{\abs{\Omega\cap K'}}{\abs{K'}}\ge\theta$ implies $\frac{\abs{\hat\Omega\cap Q'}}{\abs{Q'}}\ge C\theta_{\min}$, where $C$ depends just on $\mathbf{F}$, thanks to Assumption \ref{1}.


Let us continue with the inequalities:
\begin{equation}
\norm{\hat v_h}_{L^2(\hat\Gamma_D\cap Q)} \le  C h_{}^{-\frac{1}{2}} \norm{\hat v_h}_{L^2(\hat\Omega\cap Q')} \le C h_{}^{-\frac{1}{2}}\norm{v_h}_{L^2(\Omega\cap K')}. \label{-2}
 \end{equation}
Gathering together \eqref{-1} and \eqref{-2}, we conclude the proof.
\end{proof}

In what follows, we analyse the approximation properties of the operator $R_h$, and provide estimates that will be used in Section \ref{sec:estimate} to deduce a complete error estimate.
%
\begin{proposition}\label{thmchenonviene}
Let $\frac{1}{2}< k\le p$. There exists $C>0$ such that for every $K\in\mathcal G_h$
\begin{itemize}
\item if $\frac{1}{2}<k < p-\frac{1}{2}$, for every $v\in H^{k+1}(\Omega)$,:
\begin{equation*}
\begin{aligned}
\norm{\mathsf{h}^{\frac{1}{2}}\left(R_h\left(\Pi_0(\tilde v)\right)-\frac{\partial \tilde v}{\partial n}\right)}_{L^2(\Gamma_K)}\le C  h_{\max}^{k}\norm{\tilde v}_{H^{k+1} \left(\tilde K\cup \tilde{K}'\right)},
\end{aligned}
\end{equation*}
where $K'=K$ if $K\in\mathcal M_h^g$, otherwise $K'\in\mathcal{N}(K)\cap \mathcal M_h^g$;
\item if $p-\frac{1}{2}\le k\le p$ and each internal knot line is not repeated, for every $v\in H^{k+1}(\Omega)$, for all $\eps>0$,
\begin{equation*}
\begin{aligned}
\norm{\mathsf{h}^{\frac{1}{2}}\left(R_h\left(\Pi_0( \tilde v)\right)-\frac{\partial \tilde v}{\partial n}\right)}_{L^2(\Gamma_K)}\le C h_{\max}^{p-\frac{1}{2}-\eps}\norm{\tilde v}_{H^{k+1} \left(\tilde K\cup \tilde{K}'\right)},
\end{aligned}
\end{equation*}
where $K'=K$ if $K\in\mathcal M_h^g$, otherwise $K'\in\mathcal{N}(K)\cap \mathcal{M}_h^g$.
\end{itemize}
\end{proposition}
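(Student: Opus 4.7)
The plan is to fix $K \in \mathcal{G}_h$ and split the analysis depending on whether $K$ is a good or a bad element. When $K \in \mathcal{M}_h^g$ (so $K' = K$), the operator $R_K$ coincides with the classical normal derivative $\partial_n$, and I would start from the discrete trace inequality for cut elements,
\[
\norm{\mathsf{h}^{\frac{1}{2}}\partial_n(\Pi_0(\tilde v) - \tilde v)}_{L^2(\Gamma_K)}^2
\le C\Bigl(\norm{\nabla(\Pi_0(\tilde v) - \tilde v)}_{L^2(K)}^2 + h_K^2 \abs{\Pi_0(\tilde v) - \tilde v}_{H^2(K)}^2\Bigr),
\]
which is a counterpart of the bound hidden behind Theorem~\ref{theorem_stability} and which should be placed in the appendix. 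Combining this with the quasi-interpolation estimate $\norm{\Pi_0(\tilde v) - \tilde v}_{H^j(K)} \le C h^{k+1-j}\norm{\tilde v}_{H^{k+1}(\tilde K)}$ and with the continuity of the Sobolev--Stein extension gives the $h^k$ rate in the regime $\frac{1}{2} < k < p - \frac{1}{2}$, provided both the $H^1$ and $H^2$ approximation bounds can be invoked without saturation.

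For $K \in \mathcal{M}_h^b$ with good neighbour $K' \in \mathcal{N}(K) \cap \mathcal{M}_h^g$, the core idea is a polynomial insertion trick in the parametric domain. Let $\hat v := \tilde v \circ \F$ and choose an averaged Taylor polynomial $\hat\pi \in \mathbb{Q}_p(Q \cup Q')$ of $\hat v$, so that Bramble--Hilbert delivers $\norm{\hat v - \hat\pi}_{H^j(Q\cup Q')} \le C h^{k+1-j} \norm{\tilde v}_{H^{k+1}(\tilde K \cup \tilde K')}$ for $0 \le j \le k+1$. Since $\mathcal{E}$ acts as the identity on $\mathbb{Q}_p(Q\cup Q')$, linearity yields the decomposition
\[
R_K(\Pi_0(\tilde v)) - \frac{\partial \tilde v}{\partial n}
= \frac{\partial}{\partial n}\Bigl[\mathcal{E}\bigl((\widehat{\Pi_0(\tilde v)} - \hat\pi)|_{Q'}\bigr)\circ\F^{-1}\Bigr] + \frac{\partial}{\partial n}\bigl(\hat\pi\circ\F^{-1} - \tilde v\bigr).
\]
The first term fits into the framework of Theorem~\ref{theorem_stability}, whose proof only uses that the pullback on $Q'$ is a polynomial of degree $\le p$; this yields the bound $C\norm{\nabla(\Pi_0(\tilde v) - \hat\pi\circ\F^{-1})}_{L^2(\Omega\cap K')}$, which I then control by inserting $\pm\tilde v$, using the quasi-interpolation estimate on the good element $K'$ together with the Bramble--Hilbert estimate for $\hat\pi$. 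The second term is treated exactly as in the good-element case, by a trace inequality on $K$ applied to $\nabla(\hat\pi\circ\F^{-1} - \tilde v)$ and the chain rule through $\F$, thanks to Assumption~\ref{1}.

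The main obstacle I anticipate is the transition at $k = p - \frac{1}{2}$. In the estimates above, the $H^2$-seminorm on the cut element is bounded by $C h^{k-1}\norm{\tilde v}_{H^{k+1}}$ only as long as the spline side can carry a full $H^{k+1}$ norm; however, a $C^{p-1}$ spline (which is exactly what the hypothesis on non-repeated internal knot lines provides) belongs to $H^{p+\frac{1}{2}-\eps}$ but not to $H^{p+\frac{1}{2}}$. For $k+1 \ge p + \frac{1}{2}$ this forces a Hilbert-interpolation argument between $H^p$ and $H^{p+1}$ in order to combine the Bramble--Hilbert bound on $\hat v - \hat \pi$ with the inverse estimates hidden in $\mathcal{E}$; that interpolation costs an arbitrarily small power $\eps$, and the convergence rate saturates at $h^{p-\frac{1}{2}-\eps}$, matching the second bound in the statement.
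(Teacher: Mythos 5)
Your proof is correct and follows essentially the same route as the paper's: a cut-element trace inequality plus quasi-interpolation on good elements, and on bad elements a polynomial insertion in the parametric domain combined with the stability of the extension operator, with the $\varepsilon$-loss for $k\ge p-\frac{1}{2}$ traced to the limited Sobolev regularity of $\tilde v\circ\mathbf{F}$ across the $C^{p-1}$ knot line separating $Q$ and $Q'$ (the paper's bent Sobolev space argument). The only imprecision is that your Bramble--Hilbert bound $\norm{\hat v-\hat\pi}_{H^j(Q\cup Q')}\le C h^{k+1-j}\norm{\tilde v}_{H^{k+1}(\tilde K\cup \tilde K')}$ is stated unconditionally, whereas it already requires $\hat v\in H^{k+1}(Q\cup Q')$ and hence carries the cap $k+1\le p+\frac{1}{2}-\varepsilon$ that you only acknowledge at the end; note also that the regularity loss sits in the composed function $\tilde v\circ\mathbf{F}$, not in the spline $\Pi_0(\tilde v)$.
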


\begin{proof}
First of all, let $v\in H^{k+1}(\Omega)$, with $\frac{1}{2}< k\le p$. We take $K\in\mathcal G_h$. Let us distinguish two cases: either $K\in\mathcal M_h^g$ or $K\in\mathcal M_h^b$. 

If $K\in\mathcal M^g$. We use Lemma~\ref{lemma_hansbo} and standard approximation results:
\begin{equation*}
\begin{aligned}
&\norm{\mathsf{h}^{\frac{1}{2}}\left(R_h\left({\Pi_0}(\tilde v)\right)-\frac{\partial \tilde v}{\partial n}\right)}_{L^2(\Gamma_K)}^2=\norm{\mathsf{h}^{\frac{1}{2}}\left(\frac{\partial{\Pi_0}(\tilde v)}{\partial n}-\frac{\partial \tilde v}{\partial n}\right)}^2_{L^2(\Gamma_K)}\\
\le&  C\Big( \norm{\nabla{\Pi_0}(\tilde v)-\nabla \tilde v}^2_{L^2(K)}
+\norm{\mathsf{h}(\nabla{\Pi_0}( \tilde v)-\nabla  \tilde v)}^2_{H^1(K)} \Big)\\
\le& C \left(\norm{\mathsf{h} ^{k}\tilde v}^2_{H^{k+1}(\tilde K)}+\norm{\mathsf{ h}^{k}\tilde v}^2_{H^{k+1}(\tilde K)} \right)
\le 2Ch_{\max}^{k}\norm{\tilde v}^2_{H^{k+1}(\tilde K)}.
\end{aligned}
\end{equation*}

If $K=\mathbf F(Q)\in \mathcal M_h^b$ and $K'=\mathbf F(Q')\in\mathcal{N}(K)\cap \mathcal M_h^g$ be its good neighbor. We easily obtain:
\begin{equation}
\begin{aligned}
&\norm{\mathsf{h}^{\frac{1}{2}}\left(R_h\left({\Pi_0}(\tilde v)\right)-\frac{\partial \tilde v}{\partial n}\right)}_{L^2(\Gamma_K)}
\le C\norm{\mathsf{h}^{\frac{1}{2}}\left(\frac{\partial}{\partial n} \mathcal{E}\left(\restr{\Pi_0\left(\tilde v\right) \circ\F}{Q'} \right)-\frac{\partial\hat{ \tilde v}}{\partial n}\right)}_{L^2(\hat\Gamma_D\cap Q)}\\
\le& C\Big( \norm{\mathsf{h}^{\frac{1}{2}}\frac{\partial}{\partial n} \left(\mathcal{E}\left(\restr{\Pi_0\left(\tilde v\right) \circ\F}{Q'}\right) -\Pi_0\left( \tilde v\right)\circ\F\right)}_{L^2(\hat\Gamma_D\cap Q)}
\\&+ \norm{\mathsf{h}^{\frac{1}{2}}\frac{\partial}{\partial n}\left(\Pi_0\left(\tilde v\right)\circ\F-\hat{\tilde v}\right)}_{L^2(\hat\Gamma_D\cap Q)}\Big)\label{primo}.
\end{aligned}
\end{equation}
The second term converges as expected because of the properties of spline quasi-interpolants \cite{Buffa2016}. We focus on the first one. Let $\hat q = q\circ \F \in\mathbb{Q}_p(\mathbb{R}^d)$ be a global polynomial. Note that, trivially, $\mathcal{E}(\restr{\hat q}{Q'})=\restr{\hat q}{Q'}$. By triangular inequality:
\begin{equation}
\begin{aligned}
&\Big\| \mathsf{h}^{\frac{1}{2}}\frac{\partial}{\partial n} \left(\mathcal{E}\left(\restr{\Pi_0\left(\tilde v\right) \circ\F}{Q'}\right) -\Pi_0\left(\tilde  v\right)\circ\F\right)\Big\|_{L^2(\hat\Gamma_D\cap Q)}\\
\le&\norm{\mathsf{h}^{\frac{1}{2}}\frac{\partial}{\partial n}\mathcal{E}\left(\restr{\Pi_0\left( \tilde v\right) \circ\F}{Q'}- \hat q\right)}_{L^2(\hat\Gamma_D\cap Q)}+\norm{\mathsf{h}^{\frac{1}{2}}\frac{\partial}{\partial n}\left(\hat q-\Pi_0\left(\tilde v\right)\circ\F \right)}_{L^2(\hat\Gamma_D\cap Q)}.\label{eq2}
\end{aligned}
\end{equation}
Using Corollary~\ref{corollary_trace}, we can bound the last term of \eqref{eq2} as follows:
\begin{equation}
\norm{\mathsf{h}^{\frac{1}{2}}\frac{\partial}{\partial n}\left(\Pi_0\left(\tilde v\right)\circ\F-\hat q\right)}_{L^2(\hat\Gamma_D\cap Q))}\le C\norm{\left(\Pi_0\left(\tilde v\right)\circ\F-\hat q\right)}_{H^1(Q)}\label{secondo}.
\end{equation}
The first term of \eqref{eq2} can be bounded using the stability property of $R_h$, given in Theorem \ref{theorem_stability}:
\begin{equation}
\norm{\mathsf{h}^{\frac{1}{2}}\frac{\partial}{\partial n}\mathcal{E}\left(\restr{\Pi_0\left(\tilde v\right) \circ\F}{Q'}- \hat q\right)}_{L^2(\hat\Gamma_D\cap Q)}\le C\norm{\nabla\left(\Pi_0\left( \tilde v\right) \circ\F- \hat q\right)}_{L^2(\hat\Omega\cap Q')}\label{terzo}.
\end{equation}
Thus, combining \eqref{eq2}, \eqref{secondo} and \eqref{terzo}, we obtain:
\begin{equation}
\label{terzobis}
\begin{aligned}
&\Big\| \mathsf{h}^{\frac{1}{2}}\frac{\partial}{\partial n} \left(\mathcal{E}\left(\restr{\Pi_0\left(\tilde v\right) \circ\F}{Q'}\right) -\Pi_0\left(\tilde  v\right)\circ\F\right)\Big\|_{L^2(\hat\Gamma_D\cap Q)}\\
\le &C\norm{\left(\restr{\Pi_0\left(\tilde v\right) \circ\F}{Q'}-\hat q\right)}_{H^1(Q\cup Q')}\\
\le& C \Big(\norm{\left(\Pi_0\left(\tilde v\right)- \tilde v\right)\circ\F}_{H^1(Q\cup Q')}+\norm{\left(\tilde v\circ\F-\hat q\right)}_{H^1(Q\cup Q')}  \Big).
\end{aligned}
\end{equation}

Again, the first term converges as expected by standard approximation results. Concerning the other term, there are some issues, related to the regularity of the parametrization. By the theory of \emph{bent Sobolev spaces} (see \cite{BAZILEVS_1}), we have $\tilde v\in H^{k+1}(\Omega_0)$, but, in general, $\restr{\tilde v\circ\mathbf F}{Q\cup Q'}\notin H^{k+1}(Q\cup Q')$, since it is bent by $\F$, a spline of degree $p$ and regularity $p-1$ (under the assumption that internal knot lines are not repeated). It holds, indeed, that  $\restr{\tilde v\circ\mathbf F}{Q\cup Q'}\in H^{r+1}(Q\cup Q')$, where $r+1:=\min \{k+1,p+\frac{1}{2}-\eps \}$, hence $0\le r\le k$ and $0\le r\le p-\frac{1}{2}-\eps$. So, the following inequality follows:
\begin{equation*}
\begin{aligned}
\norm{ \tilde v\circ\F-\hat q}_{H^1(Q\cup Q')}\le C h_{\max}^r\norm{\tilde  v\circ\F}_{H^{r+1}(\tilde Q\cup \tilde Q')},
\end{aligned}
\end{equation*}
where $0\le r\le k$ and $0\le r\le p-\frac{1}{2}-\eps$, for any $\eps>0$. Hence, pushing forward to the physical domain:
\begin{equation}
\begin{aligned}
\norm{\tilde v- q}_{H^1(K\cup K')}\le C h_{\max}^r\norm{\tilde v}_{H^{r+1}(\tilde K\cup \tilde K')}.\label{quarto}
\end{aligned}
\end{equation}
Hence, from \eqref{terzobis} and \eqref{quarto}, we deduce:
\begin{equation}
\begin{aligned}\label{ineq_incasinata}
&\norm{\mathsf{h}^{\frac{1}{2}}\left(R_h\left({\Pi_0}(\tilde v)\right)-\frac{\partial \tilde v}{\partial n}\right)}_{L^2(\Gamma_K)}\\
\le& C\Big( h_{\max}^{k}\norm{\tilde v}_{H^{k+1}\left(\tilde K\cup \tilde K'\right)}
+ h_{\max}^{r}\norm{\tilde v}_{H^{r+1}\left(\tilde K\cup \tilde K'\right)}\Big).
\end{aligned}
\end{equation}
We want to rewrite inequality \eqref{ineq_incasinata} by distinguishing two cases.
\begin{itemize}
\item $\frac{1}{2}<k<p-\frac{1}{2}$. In this case, 
\begin{equation}
\norm{\mathsf{h}^{\frac{1}{2}}\left(R_h\left({\Pi_0}(\tilde v)\right)-\frac{\partial \tilde v}{\partial n}\right)}_{L^2(\Gamma_K)}\le Ch_{\max}^{r}\norm{\tilde v}_{H^{k+1}\left(\tilde K\cup \tilde K'\right)} ,
\end{equation}
for any $0\le r \le k$. Hence,
\begin{equation}
\norm{\mathsf{h}^{\frac{1}{2}}\left(R_h\left({\Pi_0}(\tilde v)\right)-\frac{\partial \tilde v}{\partial n}\right)}_{L^2(\Gamma_K)}\le Ch_{\max}^{k}\norm{\tilde v}_{H^{k+1}\left(\tilde K\cup \tilde K'\right)} .
\end{equation}
\item  If $p-\frac{1}{2}\le k\le p$, then
\begin{equation}
\norm{\mathsf{h}^{\frac{1}{2}}\left(R_h\left({\Pi_0}(\tilde v)\right)-\frac{\partial \tilde v}{\partial n}\right)}_{L^2(\Gamma_K)}\le  Ch_{\max}^{p-\frac{1}{2}-\eps}\norm{\tilde v}_{H^{k+1}\left(\tilde K\cup \tilde K'\right)},
\end{equation}
for any $\eps>0$.
\end{itemize}
\end{proof}

\begin{remark}\label{remark1}
Note that if $\frac{1}{2}< k<p-\frac{1}{2}$, the estimate is optimal. In the case $p-\frac{1}{2}\le k\le p$ the estimate is sub-optimal, instead. As already mentioned during the proof, this is due to the fact that $u\in H^{k+1}(K\cup K')$ does not imply $u\circ\mathbf F\in H^{k+1}(Q\cup Q')$: if the knot line between $K$ and $K'$ is not repeated, namely $\F \in C^{p-1}(Q\cup Q')$, then it holds $u\circ\mathbf F\in H^{r+1}(Q\cup Q')$ with $r+1:=\min\{k+1,p+\frac{1}{2}-\eps\}$. Moreover, if the parametrization is less regular than requested in the hypotheses of Proposition~\ref{thmchenonviene}, then the sub-optimality may be even worse. More precisely, if $\F\in C^s\left(Q\cup Q'\right)$, which is the case if the knot line is repeated $p-s$ times, then we have $r+1:=\min\{k+1,s+\frac{3}{2}-\eps\}$. We will see an example of this sub-optimal behaviour in the worst case scenario of $s=0$ in Section~\ref{sec:apriori}.
\end{remark}
\begin{remark}\label{remark6.7}
Any method based on polynomial extrapolation of the B-splines in the parametric domain may also suffer of this sub-optimality depending on the regularity of the isogeometric map $\mathbf F$, because the theory of bent Sobolev spaces from~\cite{BAZILEVS_1} cannot be applied. In particular the method of extended B-splines which works very well in the parametric domain~\cite{doi:10.1137/S0036142900373208}, may suffer a lack of accuracy in the isogeometric setting~\cite{MARUSSIG201879,MARUSSIG2017497}.
\end{remark}
\subsection{Properties of the stabilization in the physical domain}
\begin{theorem}\label{thm1234}
The stability property \eqref{41} holds for $R_h$ defined as in Definition \ref{stabilization_physical}, i.e., there exists $C>0$ such that for every $K\in\mathcal G_h$
\begin{equation*}
\norm{\mathsf{h}^{\frac{1}{2}} R_h(v_h)}_{L^2(\Gamma_K)}\le C\norm{\nabla v_h}_{L^2(\Omega\cap K')}\qquad\forall\; v_h\in \tilde{V}_h,
\end{equation*}
where $K'=K$ if $K\in\mathcal M^g$, otherwise $K'\in\mathcal{N}(K)\cap \mathcal{M}^g$.
\end{theorem}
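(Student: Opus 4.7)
The plan is to follow the template of Theorem~\ref{theorem_stability} with the polynomial parametric extension replaced by the projection-then-extension $\mathcal E\circ P$, which requires one extra ingredient: a Poincar\'e-type inequality. For $K\in\mathcal M_h^g$ the estimate reduces to the standard trace/inverse inequality on a cut element with $|\Omega\cap K|\ge \theta |K|$, essentially Lemma~\ref{lemma_hansbo}. The interesting case is $K\in\mathcal M_h^b$ with good neighbour $K'\in\mathcal N(K)\cap \mathcal M_h^g$. Setting $w:=P(v_h|_{K'})\in \mathbb{Q}_p(K')$ and $q:=\mathcal E(w)\in \mathbb{Q}_p(K\cup K')$, the goal becomes
\[
\norm{\mathsf h^{1/2}\,\partial_n q}_{L^2(\Gamma_K)}\le C\,\norm{\nabla v_h}_{L^2(\Omega\cap K')}.
\]

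The first observation I would exploit is that $R_K$ annihilates constants: $P$ preserves $\mathbb R\subset \mathbb Q_p(K')$, $\mathcal E$ extends constants as constants, and the normal derivative kills them. Hence I am free to subtract any constant $c$ from $v_h$, with $w$ then replaced by $w-c$ and $q$ by $q-c$. Next I would collapse the polynomial side by a cascade of standard inequalities: H\"older combined with Assumption~\ref{mesh_assumptions} passes from $L^2(\Gamma_K)$ to $L^\infty(K)$; a Markov inverse inequality on $\mathbb Q_p(K)$ removes the derivative; norm equivalence for fixed-degree polynomials on the two adjacent shape-regular elements $K,K'$ (justified by $h_K\approx h_{K'}$ and the smoothness of $\F$ from Assumption~\ref{1}) passes to $L^\infty(K')$; a further inverse inequality moves from $L^\infty(K')$ to $L^2(K')$. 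Chaining these yields the intermediate bound
\[
\norm{\mathsf h^{1/2}\,\partial_n q}_{L^2(\Gamma_K)}\le C\,h_K^{-1}\,\norm{w-c}_{L^2(K')}.
\]

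It then remains to establish $\norm{w-c}_{L^2(K')}\le C\,h_{K'}\,\norm{\nabla v_h}_{L^2(\Omega\cap K')}$, which I would do in three steps. First, $L^2$-stability of the projection gives $\norm{w-c}_{L^2(K')}=\norm{P(v_h-c)}_{L^2(K')}\le \norm{v_h-c}_{L^2(K')}$. Second, pulling back to the parametric domain where $\hat v_h-c$ is polynomial on $Q'$, and using the good-element property $|\hat\Omega\cap Q'|/|Q'|\ge C\theta$ (a consequence of Assumption~\ref{1} as already exploited inside the proof of Theorem~\ref{theorem_stability}), the physical counterpart of Lemma~\ref{50} yields $\norm{v_h-c}_{L^2(K')}\le C\norm{v_h-c}_{L^2(\Omega\cap K')}$. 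Third, choosing $c$ to be the average of $v_h$ over $\Omega\cap K'$, a Poincar\'e--Wirtinger inequality on $\Omega\cap K'$ closes the chain.

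The hard part will be the Poincar\'e--Wirtinger step: a priori $\Omega\cap K'$ may have an irregular shape, and its Poincar\'e constant must scale like $h_{K'}$ uniformly in $h$ and in the trimming configuration. This is exactly where the good-element condition $|\Omega\cap K'|\ge\theta|K'|$, combined with the global Lipschitz regularity of $\partial\Omega$, becomes essential: a standard rescaling/covering argument onto a finite family of reference configurations gives a Poincar\'e constant depending only on $\theta$, $p$, and $\F$.
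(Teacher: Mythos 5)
Your overall strategy is sound and the bookkeeping of powers of $h$ in the reduction
$\norm{\mathsf h^{1/2}\partial_n q}_{L^2(\Gamma_K)}\le C h_K^{-1}\norm{w-c}_{L^2(K')}$
checks out ($h^{(d-1)/2}$ from H\"older and Assumption~\ref{mesh_assumptions}, $h^{-1}$ from Markov, $h^{-d/2}$ from the $L^\infty$--$L^2$ inverse inequality, $h^{1/2}$ from the weight). But your route differs from the paper's in one essential respect: the paper never subtracts a constant and never invokes a Poincar\'e inequality. It applies the normal derivative \emph{first}, so that the object being estimated on $K'$ is $\nabla P(\restr{v_h}{K'})$ --- itself a (vector of) polynomial(s) of fixed degree --- and then passes from $L^\infty(K')$ to $L^2(\Omega\cap K')$ directly via Lemma~\ref{50}, using $\abs{\Omega\cap K'}\ge\theta\abs{K'}$, before concluding with the $H^1$-stability of the $L^2$-projection. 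Your detour (remove the derivative by Markov, reinsert it by Poincar\'e) is what creates the difficulty you flag at the end.

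That difficulty is a genuine gap as you have justified it. A Poincar\'e--Wirtinger inequality on $\Omega\cap K'$ with constant $Ch_{K'}$, uniform in $h$ and in the trimming configuration, does \emph{not} follow from $\abs{\Omega\cap K'}\ge\theta\abs{K'}$ together with the Lipschitz regularity of $\partial\Omega$: under the rescaling $x\mapsto x/h_{K'}$ the cut configurations do not reduce to a finite family of reference shapes, their Lipschitz characters are not uniformly controlled, and for general $H^1$ functions the Poincar\'e constant of a cut cell can degenerate (thin necks, near-disconnection) even when the volume fraction is bounded below. The step can nevertheless be rescued, but by a different mechanism: $\restr{v_h}{K'}$ is a mapped polynomial of fixed degree, so Lemma~\ref{50} applies to $\nabla\hat v_h$ and gives $\norm{\nabla \hat v_h}_{L^2(Q')}\le C\norm{\nabla \hat v_h}_{L^2(\hat\Omega\cap Q')}$; combining this with the standard Poincar\'e inequality on the full box $Q'$ and the elementary bound
$\abs{Q'}^{1/2}\,\abs{\bar v - c}\le\theta^{-1/2}\norm{\hat v_h-\bar v}_{L^2(Q')}$ (where $\bar v$ is the mean over $Q'$ and $c$ the mean over $\hat\Omega\cap Q'$) yields $\norm{\hat v_h-c}_{L^2(Q')}\le Ch\norm{\nabla\hat v_h}_{L^2(\hat\Omega\cap Q')}$ with $C=C(\theta,p,d,\F)$. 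So the constant you need depends on $\theta$ and $p$ precisely \emph{because} the argument must exploit the polynomial structure, not the geometry of $\partial\Omega$. Once this is recognized, the cleaner course is the paper's: skip the constant-subtraction and Poincar\'e altogether and apply Lemma~\ref{50} to the gradient.
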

\begin{proof}
Let us start applying H\"{o}lder inequality and Lemma~\ref{51}:
\begin{equation*}
\begin{aligned}
\norm{R_h(v_h)}_{L^2(\Gamma_ K)}=&\norm{\frac{\partial}{\partial n}\mathcal{E}\left(P(\restr{v_h}{K'})\right)}_{L^2(\Gamma_K)} 
\le \sqrt{\abs{\Gamma_K}}\norm{\frac{\partial}{\partial n}\mathcal{E}\left(P(\restr{v_h}{K'})\right)}_{L^\infty(\Gamma_K)}\\
\le & \sqrt{\abs{\Gamma_K}}\norm{\nabla \mathcal{E}\left(P(\restr{v_h}{K'})\right)}_{L^\infty(K)}
\le C \sqrt{\abs{\Gamma_K}}\norm{\nabla P(\restr{v_h}{K'})}_{L^\infty(K')}.
\end{aligned}
\end{equation*}
We finish with Lemma~\ref{50}, Assumption \ref{mesh_assumptions} and the stability of the $L^2$-orthogonal projection $P$, see for instance~\cite{bramble},
\begin{equation*}
\begin{aligned}
\norm{\mathsf{h}^{\frac{1}{2}}R_h(v_h)}_{L^2(\Gamma_ K)}\le& C h_{}^{-\frac{d}{2}}\sqrt{\abs{\Gamma_K}}\norm{\mathsf{h}^{\frac{1}{2}}\nabla P(\restr{v_h}{K'})}_{L^2(\Omega\cap K')}\\
\le& C \norm{\nabla P(v_h)}_{L^2(\Omega\cap K')}
\le C \norm{\nabla v_h}_{L^2(\Omega\cap K')}.
\end{aligned}
\end{equation*}
We conclude by summing over $K\in\mathcal G_h$.
\end{proof}

\begin{proposition}\label{approx_physical}
Let $\frac{1}{2}< k\le p$. There exists $C>0$ such that for every $K\in\mathcal G_h$
\begin{equation*}
\norm{\mathsf{h}^{\frac{1}{2}}\left(R_h\left(\Pi_0(\tilde v)\right)-\frac{\partial \tilde v}{\partial n}\right)}_{L^2(\Gamma_K)}\le C h_{}^{k}\norm{\tilde v}_{H^{k+1}\left(\tilde K\cup \tilde{ K}'\right)}\qquad\forall\  v\in H^{k+1}(\Omega),
\end{equation*}
where $K'=K$ if $K\in\mathcal M_h^g$, otherwise $K'\in\mathcal{N}(K)\cap \mathcal M_h^g$.
\end{proposition}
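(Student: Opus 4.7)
The argument closely parallels the proof of Proposition~\ref{thmchenonviene}, but the use of the physical $L^2$-projection allows us to by-pass the bent-Sobolev obstruction and thereby regain optimal convergence. The plan is to split $\mathcal G_h$ into the good and bad sub-families and, in the bad case, insert an arbitrary polynomial $q \in \mathbb{Q}_p(K\cup K')$ as an intermediate function in order to decouple the error into a stability-controlled part and a Bramble--Hilbert part.

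If $K\in\mathcal M_h^g$ then $R_h(\Pi_0(\tilde v))|_K=\partial_n \Pi_0(\tilde v)$, so the trace inequality of Lemma~\ref{lemma_hansbo} followed by the spline quasi-interpolation estimate recalled in Section~\ref{47} reproduces the good-element part of the proof of Proposition~\ref{thmchenonviene} and yields directly the bound $C h^k\,\|\tilde v\|_{H^{k+1}(\tilde K)}$. For $K\in\mathcal M_h^b$ with good neighbor $K'\in\mathcal N(K)\cap\mathcal M_h^g$, the key observation is that if $q\in\mathbb{Q}_p(K\cup K')$ then $P(q|_{K'})=q|_{K'}$ (the $L^2$-projection onto $\mathbb{Q}_p(K')$ fixes polynomials) and $\mathcal E(q|_{K'})=q$, hence
\[
R_h(\Pi_0(\tilde v))-\frac{\partial \tilde v}{\partial n}=\frac{\partial}{\partial n}\mathcal E\!\left(P\bigl((\Pi_0(\tilde v)-q)|_{K'}\bigr)\right)+\frac{\partial}{\partial n}(q-\tilde v)\quad\text{on }\Gamma_K.
\]

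For the first term I would retrace the chain of inequalities in the proof of Theorem~\ref{thm1234}: Hölder in $L^\infty$--$L^2$ together with Assumption~\ref{mesh_assumptions}, Lemma~\ref{51} controlling the polynomial extension $\mathcal E$, the inverse inequality on the polynomial $P((\Pi_0(\tilde v)-q)|_{K'})\in\mathbb{Q}_p(K')$ on the uncut element $K'$, and the $H^1$-stability of the $L^2$-projection $P$; these steps do not require the argument to be a spline restricted to $\tilde V_h$, so they apply to $\Pi_0(\tilde v)-q$ without change and yield
$\|\mathsf h^{1/2}\partial_n\mathcal E(P((\Pi_0(\tilde v)-q)|_{K'}))\|_{L^2(\Gamma_K)}\le C\|\nabla(\Pi_0(\tilde v)-q)\|_{L^2(K')}$. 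For the second term, Lemma~\ref{lemma_hansbo} gives the standard bound
$\|\mathsf h^{1/2}\partial_n(\tilde v-q)\|_{L^2(\Gamma_K)}\le C\bigl(\|\nabla(\tilde v-q)\|_{L^2(K)}+h\,\|\tilde v-q\|_{H^2(K)}\bigr)$.

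Finally, I would take $q$ to be an averaged Taylor polynomial of $\tilde v$ on $\tilde K\cup\tilde K'$, for which Bramble--Hilbert provides $\|\tilde v-q\|_{H^j(K\cup K')}\le C h^{k+1-j}\|\tilde v\|_{H^{k+1}(\tilde K\cup\tilde K')}$ for $j=1,2$; combined with the quasi-interpolation estimate $\|\nabla(\Pi_0(\tilde v)-\tilde v)\|_{L^2(K')}\le Ch^{k}\|\tilde v\|_{H^{k+1}(\tilde K')}$, this delivers $h^k\|\tilde v\|_{H^{k+1}(\tilde K\cup\tilde K')}$ for both terms, which is the claim. The main point---and the only substantive difference with Proposition~\ref{thmchenonviene}---is that $q$ lives in the physical variables, so we never have to compose with $\mathbf F$ and the full regularity $H^{k+1}(\tilde K\cup\tilde K')$ of $\tilde v$ survives intact; the Bramble--Hilbert step is therefore not throttled by the regularity $p-1$ of the parametrization, which was the source of sub-optimality in the parametric stabilization (cf.\ Remark~\ref{remark1}). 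The only delicate point to check carefully is that the constants in the inverse inequality and in the $H^1$-stability of $P$ on $K'$ are mesh-independent, which follows from Assumption~\ref{shape_regularity} together with Assumption~\ref{1}.
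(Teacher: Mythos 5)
Your proposal is correct and follows essentially the same route as the paper's proof: insert an intermediate polynomial $q$ (the paper takes $q=P(\tilde v)\in\mathbb{Q}_p(K')$, you take an averaged Taylor polynomial, which changes nothing substantive), use the fact that $P$ and $\mathcal E$ fix polynomials so that the stability bound of Theorem~\ref{thm1234} controls the term $\partial_n\mathcal E(P((\Pi_0(\tilde v)-q)|_{K'}))$ by $\|\nabla(\Pi_0(\tilde v)-q)\|_{L^2(\Omega\cap K')}$, and conclude with quasi-interpolation plus polynomial approximation, the whole point being that everything lives in physical variables so no composition with $\mathbf F$ and no bent-Sobolev loss occurs. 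The only (shared) informality is that applying Lemma~\ref{lemma_hansbo} to $\partial_n(\tilde v-q)$ nominally requires local $H^2$ regularity, which for $\tfrac12<k<1$ needs a fractional-order version; the paper glosses over this in exactly the same way.
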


\begin{proof}
We can focus on the case $K\in\mathcal M_h^b$. Let $K'\in\mathcal{N}(K)\cap \mathcal M_h^g$ and $q\in\mathbb{Q}_p\left( K'\right)$.
\begin{equation}
\begin{aligned}
&\norm{\mathsf{h}^{\frac{1}{2}}\left(R_h\left(\Pi_0\left(\tilde v\right)\right)-\frac{\partial\tilde v}{\partial n}\right)}_{L^2(\Gamma_K)}=\norm{\mathsf{h}^{\frac{1}{2}}\frac{\partial}{\partial n}\left(\mathcal{E}\left(P\left(\restr{\Pi_0\left(\tilde v\right)}{K'}\right)\right)-\tilde v\right)}_{L^2(\Gamma_K)}\\
\le & \norm{\mathsf{h}^{\frac{1}{2}}\frac{\partial}{\partial n}\left(\mathcal{E}\left(P\left(\restr{\Pi_0\left(\tilde v\right)}{K'}\right)\right)-q\right)}_{L^2(\Gamma_K)}+\norm{\mathsf{h}^{\frac{1}{2}}\frac{\partial}{\partial n}\left(q-\tilde v\right)}_{L^2(\Gamma_K)}.\label{passage_a}
\end{aligned}
\end{equation}
Let us focus on the first term. After having observed that $P(q)=q$, we apply the stability property proved in Theorem \ref{thm1234} and, again, triangular inequality
\begin{equation}
\begin{aligned}
&\norm{\mathsf{h}^{\frac{1}{2}}\frac{\partial}{\partial n}\left(\mathcal{E}\left(P\left(\restr{\Pi_0\left(\tilde v\right)}{K'}\right)\right)-q\right)}_{L^2(\Gamma_K)}\le  C \norm{\nabla\left( \Pi_0\left(\tilde v\right)-q\right)}_{L^2(\Omega\cap K')} \\
 \le &  C \Big(\norm{\nabla\left(q-\tilde v\right)}_{L^2(\Omega\cap K')}
 +\norm{\nabla\left(\tilde v-\Pi_0\left(\tilde v\right)\right)}_{L^2(\Omega\cap K')} \Big)\label{passage_b}
\end{aligned}
\end{equation}
We choose $q=P\left( v\right)$. Note that the second term of \eqref{passage_a} converges as expected by the approximation properties of the $L^2$-projection. Plugging \eqref{passage_b} into \eqref{passage_a}:
\begin{equation}
\begin{aligned}
\norm{\mathsf{h}^{\frac{1}{2}}\left(R_h\left(\Pi_0\left(\tilde v\right)\right)-\frac{\partial \tilde v}{\partial n}\right)}_{L^2(\Gamma_K)}\le & C h_{}^{k}\left(\norm{\tilde v}_{H^{k+1}\left(\tilde K'\right)}+\norm{\tilde v}_{H^{k+1}\left(\tilde K\right)}\right)\\
\le& C h_{}^{k}\norm{\tilde v}_{H^{k+1}\left(\tilde K\cup \tilde{ K}'\right)}.
\end{aligned}
\end{equation}
We conclude by summing over $K\in\mathcal G_h$.
\end{proof}

\section{A priori error estimate} \label{sec:estimate}
The preparatory results of Propositions \ref{thmchenonviene} and \ref{approx_physical} were needed in order to prove the following convergence theorem.
\begin{theorem}\label{apriori_error_estimate}
Let $\frac{1}{2}< k\le p$. There exists $\overline\beta>0$ such that, for every $\beta\ge\overline\beta$, if $u\in H^{k+1}(\Omega)$ is the solution to \eqref{3} and $u_h\in \tilde{V}_h$ solution to \eqref{40}, then
\begin{equation}
\norm{u-u_h}_{1,h,\Omega}\le C\left( h_{}^k\norm{u}_{H^{k+1}(\Omega)} +h_{}^r\norm{\tilde u}_{H^{r+1}(S_h)}\right),\label{estimate_speremo}
\end{equation}
where $S_h$ is the strip of width $ch_{}$, $c\ge 1$, such that $S_h\supseteq \bigcup_{K\in \mathcal{M}_h^b}\left(\tilde K\cup \tilde K'\right)$, and $K'\in\mathcal{M}_h^g\cap\mathcal N(K)$. Moreover, \eqref{estimate_speremo} holds for every $r$ such that:
\begin{itemize}
\item $0\le r < p-\frac{1}{2}$ with the stabilization in the parametric domain of Definition \ref{stabilization_parametric};
\item $0\le r \le p$ with the stabilization in the physical domain of Definition \ref{stabilization_physical}.
\end{itemize}
\end{theorem}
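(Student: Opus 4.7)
The plan is a standard Strang-type argument adapted to the non-consistent Nitsche setting. Set $\pi_h := \Pi_0(\tilde u)$, $\eta := u - \pi_h$ and $e_h := u_h - \pi_h \in \tilde V_h$. By a triangle inequality in the norm $\|\cdot\|_{1,h,\Omega}$ the task splits into estimating $\|\eta\|_{1,h,\Omega}$ and $\|e_h\|_{1,h,\Omega}$ separately. The first piece is directly controlled by the approximation properties of $\Pi_0$ stated at the end of Section~\ref{47}: one has $\|\nabla \eta\|_{L^2(\Omega)} \le C h^k \|\tilde u\|_{H^{k+1}(\Omega_0)}$, and a local trace inequality combined with the standard $L^2$ and $H^1$ bounds gives $\|\mathsf{h}^{-\frac{1}{2}}\eta\|_{L^2(\Gamma_D)} \le C h^k \|\tilde u\|_{H^{k+1}(\Omega_0)}$. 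This already delivers the optimal $h^k\|u\|_{H^{k+1}(\Omega)}$ contribution to \eqref{estimate_speremo}.

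The core of the argument is bounding $\|e_h\|_{1,h,\Omega}$. First I would establish a Galerkin-type consistency identity. Applying Green's identity to $-\Delta u = f$, substituting $u=g_D$ on $\Gamma_D$ and $\partial u/\partial n = g_N$ on $\Gamma_N$, and comparing with \eqref{40}, one checks that for every $v_h\in \tilde V_h$
\begin{equation*}
\overline{a}_h(u_h, v_h) = \int_\Omega \nabla u\cdot\nabla v_h - \int_{\Gamma_D}\frac{\partial u}{\partial n}v_h - \int_{\Gamma_D} u\, R_h(v_h) + \beta\int_{\Gamma_D}\mathsf{h}^{-1} u\, v_h.
\end{equation*}
Subtracting $\overline{a}_h(\pi_h, v_h)$ from both sides and then picking $v_h = e_h$ produces the clean residual relation
\begin{equation*}
\overline{a}_h(e_h, e_h) = \int_\Omega \nabla\eta\cdot\nabla e_h - \int_{\Gamma_D}\!\left(\frac{\partial u}{\partial n} - R_h(\pi_h)\right) e_h - \int_{\Gamma_D}\eta\, R_h(e_h) + \beta\int_{\Gamma_D}\mathsf{h}^{-1}\eta\, e_h,
\end{equation*}
and coercivity of $\overline{a}_h$ from Theorem~\ref{thm_well_posedness}, valid for $\beta \ge \overline\beta$, controls the left-hand side from below by $\alpha\|e_h\|^2_{1,h,\Omega}$.

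Each of the four residual terms is then bounded via Cauchy--Schwarz combined with the stability property \eqref{41} of $R_h$, so that one factor of $\|e_h\|_{1,h,\Omega}$ can be absorbed on the left. The first, third and fourth terms are pure interpolation errors and yield the expected $h^k\|\tilde u\|_{H^{k+1}(\Omega_0)}$ rate. The non-standard term is the second one: I would split the integral element by element. On a good cut element $K\in\mathcal M_h^g\cap \mathcal G_h$, the analysis reduces to a standard local trace inequality plus the $\Pi_0$-approximation bound, giving again $h^k$. On a bad cut element $K\in\mathcal M_h^b$ with good neighbour $K'$, I would invoke Proposition~\ref{thmchenonviene} when $R_h$ is the parametric stabilization and Proposition~\ref{approx_physical} for the physical one; this produces a local factor $h^r\|\tilde u\|_{H^{r+1}(\tilde K\cup\tilde K')}$, with admissible range $0\le r< p-\frac{1}{2}$ in the parametric case and $0\le r\le p$ in the physical case, exactly as claimed.

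The final step is to sum the bad-element contributions over $K\in\mathcal M_h^b$. Because each enlarged patch $\tilde K\cup\tilde K'$ sits inside the strip $S_h$ of width $\mathcal O(h)$, and because shape regularity (Assumption~\ref{shape_regularity}) together with Assumption~\ref{assumption_neigh} ensures that these patches have uniformly bounded overlap, the sum is controlled by $C h^r\|\tilde u\|_{H^{r+1}(S_h)}$ with a constant independent of the trimming configuration. Combining this bound with the estimate for $\|\eta\|_{1,h,\Omega}$ through the triangle inequality yields \eqref{estimate_speremo}. The main obstacle I anticipate is precisely this last summation: one has to check carefully that the extended supports $\tilde K\cup\tilde K'$ have bounded overlap uniformly in $h$ and in the cut geometry, and that the constants in Propositions~\ref{thmchenonviene} and \ref{approx_physical} can be made uniform across $K\in\mathcal G_h$, so that the norm $\|\tilde u\|_{H^{r+1}(S_h)}$ appears only once with a constant independent of the mesh size and of the trimming curve.
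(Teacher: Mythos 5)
Your proposal is correct and follows essentially the same route as the paper: a Strang-type argument combining the coercivity of $\overline{a}_h$ (Theorem~\ref{thm_well_posedness}) with the consistency identity obtained from Green's formula, leading to the same four residual terms, which are bounded via the stability property \eqref{41} and Propositions~\ref{thmchenonviene} and~\ref{approx_physical} with $v_h=\Pi_0(\tilde u)$. The only cosmetic difference is that you test the coercivity directly with $e_h$ instead of writing the bound as a supremum over $w_h\in\tilde V_h$, and you correctly flag the bounded-overlap summation over $\mathcal M_h^b$ that yields the strip norm $\norm{\tilde u}_{H^{r+1}(S_h)}$.
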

\begin{proof}
From Theorems \ref{thm_well_posedness}, \ref{theorem_stability} and \ref{thm1234} we know that $\overline{a}_h(\cdot,\cdot)$ is coercive w.r.t. $\norm{\cdot}_{1,h,\Omega}$, i.e. there exists $\alpha>0$ such that for every $u_h\in \tilde{V}_h$
\begin{equation}\label{passage_coercivity}
\alpha \sup_{\substack{w_h\in \tilde{V}_h\\ w_h\ne 0}}\frac{\overline{a}_h(u_h,w_h)}{\norm{w_h}_{1,h,\Omega}}\ge\norm{u_h}_{1,h,\Omega}.
\end{equation}
Let $v_h\in \tilde{V}_h$. Using the triangular inequality and coercivity, we get:
\begin{equation}\label{first_passage}
\begin{aligned}
\norm{u-u_h}_{1,h,\Omega}\le&\norm{u-v_h}_{1,h,\Omega}+\norm{v_h-u_h}_{1,h,\Omega}\\
\le&\norm{u-v_h}_{1,h,\Omega}+\alpha\sup_{\substack{w_h\in V_h\\ w_h\ne 0}}\frac{\overline{a}_h(v_h-u_h,w_h)}{\norm{w_h}_{1,h,\Omega}}.
\end{aligned}
\end{equation}
Then, recalling that $u_h$ solves \eqref{40}, we get
\begin{equation*}
\begin{aligned}
\overline{a}_h(v_h-u_h,w_h)=&\overline{a}_h(v_h,w_h)-\overline{a}_h(u_h,w_h)=\overline{a}_h(v_h,w_h)-\overline F_h(w_h)\\
=&\int_{\Omega}\nabla v_h\cdot\nabla w_h-\int_{\Gamma_D}R_h(v_h)w_h-\int_{\Gamma_D} v_h R_h(w_h)+\beta\int_{\Gamma_D}\mathsf{h}^{-1}v_hw_h\\
&-\int_{\Omega}fw_h+\int_{\Gamma_D}g_D R_h(w_h)-\beta \int_{\Gamma_D}\mathsf{h}^{-1}g_D w_h.
\end{aligned}
\end{equation*}
Since $u$ solves \eqref{3}: $\int_{\Omega}fw_h=\int_{\Omega}\nabla u\cdot \nabla w_h -\int_{\Gamma_D}\frac{\partial u}{\partial n}w_h$ and $\restr{u}{\Gamma_D}=g_D$, hence:
\begin{equation*}
\begin{aligned}
\overline{a}_h(v_h-u_h,w_h)=&\underbrace{\int_\Omega \nabla(v_h-u)\cdot\nabla w_h}_{\RomanNumeralCaps 1}-\underbrace{\int_{\Gamma_D} (R_h(v_h)-\frac{\partial u}{\partial n})w_h}_{\RomanNumeralCaps 2}\\
&+\underbrace{\int_{\Gamma_D}(u-v_h)R_h(w_h)}_{\RomanNumeralCaps 3}+\underbrace{\beta\int_{\Gamma_D} \mathsf{h}^{-1}(v_h-u)w_h}_{\RomanNumeralCaps 4}.
\end{aligned}
\end{equation*}

Let us now estimate the four terms separately. We will leave $\RomanNumeralCaps 2$ for last since its analysis depends on the choice of the stabilization. Clearly
\begin{equation}
\RomanNumeralCaps 1 + \RomanNumeralCaps 4 \le  C \norm{u-v_h}_{1,h,\Omega}\norm{w_h}_{1,h,\Omega},\label{passage1,4}
\end{equation}
where $C>0$ linearly depends on $\beta$. Note that this will not compromise the uniformity of the resulting constant, provided that $\beta$ is chosen as close as possible to $\overline \beta$ (see the discussion in Remark~\ref{remark_beta}).
Using the stability property \eqref{41} and taking $K'\in\mathcal{N}(K)\cap \mathcal M_h^g$ (if $K$ itself is a good element, then take $K'=K$), we get:
\begin{equation}
\begin{aligned}
\RomanNumeralCaps 3^2\le& \norm{\mathsf{h}^{-\frac{1}{2}}\left(u-v_h\right)}^2_{L^2(\Gamma_D)}\sum_{K\in\mathcal{G}_h}\norm{\mathsf{h}^{\frac{1}{2}}R_h(w_h)}^2_{L^2(\Gamma_K)}\\
\le& \norm{u-v_h}^2_{1,h,\Omega}C\sum_{K\in\mathcal{G}_h} \norm{\nabla w_h}^2_{L^2(K'\cap\Omega)}
\le C \norm{u-v_h}^2_{1,h,\Omega}\norm{ w_h}^2_{1,h,\Omega}\label{passage3}.
\end{aligned}
\end{equation}
Let us estimate the term $\RomanNumeralCaps 2$. By definition of the norm $\norm{\cdot}_{1,h,\Omega}$:
\begin{equation*}
\begin{aligned}
\RomanNumeralCaps 2 
 \le \norm{\mathsf{h}^{\frac{1}{2}}\left(R_h(v_h)-\frac{\partial u}{\partial n}\right)}_{L^2({\Gamma_D})}\norm{w_h}_{1,h,\Omega}.
\end{aligned}
\end{equation*}
Now, we choose $v_h=\Pi_0(\tilde u)$ and distinguish two cases.
\begin{itemize}
\item If we use the stabilization in the parametric domain of Definition \ref{stabilization_parametric}, hence apply Proposition \ref{thmchenonviene}, we get, for any $0\le r<p-\frac{1}{2}$,
\begin{equation*}
\begin{aligned}
\sum_{K\in\mathcal G_h}&\norm{\mathsf{h}^{\frac{1}{2}}\left( R_h\left(\Pi_0\left(\tilde u\right)\right)-\frac{\partial u}{\partial n}\right)}_{L^2(\Gamma_K)}\norm{w_h}_{1,h,\Omega} \\
&=\sum_{K\in\mathcal G_h}\norm{\mathsf{h}^{\frac{1}{2}}\left( R_h\left(\Pi_0\left(\tilde u\right)\right)-\frac{\partial \tilde u}{\partial n}\right)}_{L^2(\Gamma_K)}\norm{w_h}_{1,h,\Omega}\\
&\le \sum_{K\in\mathcal G_h}C\left(h_{}^k\norm{\tilde u}_{H^{k+1}\left(\tilde K\cup\tilde K'\right)}+h^r_{}\norm{\tilde u}_{H^{k+1}\left(\tilde K\cup\tilde K'\right)} \right)\norm{w_h}_{1,h,\Omega},
\end{aligned}
\end{equation*}
\item Employing the stabilization in the physical domain of Definition \ref{stabilization_physical}, hence apply Proposition \ref{approx_physical}, we obtain
\begin{equation*}
\begin{aligned}
\sum_{K\in\mathcal G_h}&\norm{\mathsf{h}^{\frac{1}{2}}\left( R_h\left(\Pi_0\left(\tilde u\right)\right)-\frac{\partial u}{\partial n}\right)}_{L^2(\Gamma_K)}\norm{w_h}_{1,h,\Omega}\\
&=\sum_{K\in\mathcal G_h}\norm{\mathsf{h}^{\frac{1}{2}}\left( R_h\left(\Pi_0\left(\tilde u\right)\right)-\frac{\partial \tilde u}{\partial n}\right)}_{L^2(\Gamma_K)}\norm{w_h}_{1,h,\Omega}\\
&\le \sum_{K\in\mathcal G_h}C h_{}^k\norm{\tilde u}_{H^{k+1}\left(\tilde K\cup\tilde K'\right)}\norm{w_h}_{1,h,\Omega}.
\end{aligned}
\end{equation*}
\end{itemize}
Therefore, we have that
\begin{equation*}
\RomanNumeralCaps 2\le C \left( h_{}^k\norm{ u}_{H^{k+1}(\Omega)}+ h^r_{}\norm{\tilde u}_{H^{k+1}(S_h)} \right)\norm{w_h}_{1,h,\Omega},
\end{equation*}
where $S_h$ is the strip of width $ch_{}$, $c\ge 1$, such that $S_h\supseteq \bigcup_{K\in \mathcal{M}_h^b}\left(\tilde K\cup \tilde K'\right)$, and $K'\in\mathcal{M}_h^g\cap\mathcal N(K)$ and we can choose any $r$ such that:
\begin{itemize}
\item $0\le r<p-\frac{1}{2}$ if we use the stabilization in the parametric  domain, hence apply Proposition \ref{thmchenonviene};
\item $0\le r\le p$ if we use the one in the physical domain and use Proposition \ref{approx_physical}.
\end{itemize}
As a consequence, we have that
\begin{equation}\label{last_inequality}
\begin{aligned}
\overline{a}_h(\Pi_0(\tilde u)-u_h,w_h)\le &\norm{u-\Pi_0(\tilde u)}_{1,h,\Omega}\norm{w_h}_{1,h,\Omega} \\
&+ C \left( h_{}^k\norm{u}_{H^{k+1}(\Omega)}+ h^r_{}\norm{\tilde u}_{H^{k+1}(S_h)} \right)\norm{w_h}_{1,h,\Omega}\\
&+ C\norm{u-\Pi_0(\tilde u)}_{1,h,\Omega}\norm{w_h}_{1,h,\Omega},
\end{aligned}
\end{equation}
where in \eqref{passage1,4}, \eqref{passage3} we choose again $v_h=\Pi_0(\tilde u)$.

We now combine the last inequality \eqref{last_inequality} with \eqref{passage_coercivity} and \eqref{first_passage} to obtain
\begin{equation*}
\begin{aligned}
&\norm{u-u_h}_{1,h,\Omega}\le \norm{u-\Pi_0(\tilde u)}_{1,h,\Omega}+\alpha \sup_{\substack{w_h\in \tilde{V}_h\\ w_h\ne 0}}\frac{\overline{a}_h(\Pi_0(\tilde u)-u_h,w_h)}{\norm{w_h}_{1,h,\Omega}}\\
\le& \left(1+\alpha\left(1+C\right)\right) \norm{u-\Pi_0(\tilde u)}_{1,h,\Omega}
+\alpha C \Big( h_{}^k\norm{u}_{H^{k+1}(\Omega)}
+ h^r_{}\norm{\tilde u}_{H^{k+1}(S_h)} \Big).
\end{aligned}
\end{equation*}
Using approximation results of quasi-interpolants in spline spaces \cite{Buffa2016}, we conclude
\begin{equation*}
\begin{aligned}
\norm{u-u_h}_{1,h,\Omega}\le C\left( h_{}^k\norm{u}_{H^{k+1}(\Omega)} +h_{}^r\norm{\tilde u}_{H^{r+1}(S_h)}\right),
\end{aligned}
\end{equation*}
where $r$ is the same as above.
\end{proof}
\begin{remark}
As already observed in Remark \ref{remark1}, when $u\in H^{k+1}(\Omega)$ with $\frac{1}{2}< k<p-\frac{1}{2}$, both stabilizations give rise to optimal a priori error estimates. When $u\in H^{k+1}(\Omega)$ with $p-\frac{1}{2}\le k\le p$ and $k>\frac{1}{2}$, instead, stabilization in Definition \ref{stabilization_parametric} is sub-optimal. In this case the estimate can be modified and improved using the following result.
\end{remark}
\begin{lemma}\label{lemma_improved_ineq}
Let $\eps>0$ and $S_h$ be defined as in Theorem~\ref{apriori_error_estimate}. Then, there exists $C>0$ such that
\begin{equation*}
\norm{\tilde u}_{H^{r+1}(S_h)}\le C h_{}^{\frac{1}{2}-\eps}\norm{u}_{H^{p+\frac{3}{2}-\eps}(\Omega)}\qquad\forall\ u\in H^{p+\frac{3}{2}-\eps}(\Omega),\ \forall\ 0\le r<p-\frac{1}{2}.
\end{equation*}
\end{lemma}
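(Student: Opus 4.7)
My plan is to combine the Sobolev--Stein extension with a ``thin strip'' trace-type inequality on a tubular neighborhood of $\Gamma_{trim}$. The basic gain is the following: a strip of width $\mathcal O(h)$ around $\Gamma_{trim}$ costs a factor $h^{1/2}$ in the $H^{r+1}$ norm, at the price of $\tfrac12+\eps_0$ additional Sobolev regularity used via the trace theorem on slices. The hypothesis $r<p-\tfrac12$ provides exactly the room needed to absorb this regularity loss against the available source space $H^{p+3/2-\eps}$.

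First I would invoke the Sobolev--Stein extension to bound $\|\tilde u\|_{H^{p+3/2-\eps}(\Omega_0)}\le C\|u\|_{H^{p+3/2-\eps}(\Omega)}$. The heart of the proof is then the strip inequality
\begin{equation*}
\|w\|_{H^{r+1}(S_h)}\le C\,h^{1/2}\|w\|_{H^{r+3/2+\eps_0}(\Omega_0)},\qquad\forall\, w\in H^{r+3/2+\eps_0}(\Omega_0),
\end{equation*}
for some small $\eps_0>0$ to be chosen. To prove it I would introduce a bi-Lipschitz tubular parametrization $\Phi:\Gamma_{trim}\times(-\delta_0,\delta_0)\to T\subset\Omega_0$, whose existence follows from the (piecewise) smoothness of $\Gamma_{trim}$ implicit in the Lipschitz assumption on $\Omega$. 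For $h$ sufficiently small $S_h\subset T$ and $\Phi^{-1}(S_h)\subset\Gamma_{trim}\times(-Ch,Ch)$. For integer $m=r+1$, I would estimate
\begin{equation*}
\|w\|_{H^m(S_h)}^2\le C\sum_{j+|\beta|\le m}\int_{-Ch}^{Ch}\|\partial_s^j\partial_y^\beta(w\circ\Phi)(\cdot,s)\|^2_{L^2(\Gamma_{trim})}\,ds,
\end{equation*}
and then apply the trace theorem slice by slice, uniformly in $s$, to control each integrand by $C\|w\|^2_{H^{m+1/2+\eps_0}(\Omega_0)}$. Integration over $s\in(-Ch,Ch)$ supplies the factor $Ch$ and, after taking square roots, the gain $h^{1/2}$. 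The fractional case $r+1\notin\mathbb N$ follows by real interpolation between the two adjacent integer exponents, which is legitimate precisely because the strip inequality constants are independent of $h$.

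To conclude, I would choose $\eps_0:=p-r-\eps>0$, which is positive because $r<p-\tfrac12$ and we may restrict to $\eps<\tfrac12$ (otherwise the estimate is trivial: $h^{1/2-\eps}\ge 1$ and a plain embedding $H^{p+3/2-\eps}\hookrightarrow H^{r+1}$ suffices). Then $r+\tfrac32+\eps_0=p+\tfrac32-\eps$, so the continuous Sobolev embedding gives $\|w\|_{H^{r+3/2+\eps_0}(\Omega_0)}\le C\|w\|_{H^{p+3/2-\eps}(\Omega_0)}$. Combining with the extension bound and the elementary inequality $h^{1/2}\le h^{1/2-\eps}$ (for $h\le 1$) closes the proof. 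The main technical obstacle is the uniform slicing/trace argument at integer regularity and its extension to fractional $r+1$ via interpolation; both hinge on the tubular parametrization being bi-Lipschitz with constants independent of $h$, and on the uniformity of the trace constant across slices parallel to $\Gamma_{trim}$.
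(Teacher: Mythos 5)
Your route is genuinely different from the paper's, and the exponent bookkeeping is right: the slack coming from $r<p-\frac{1}{2}$ is exactly the $\frac{1}{2}+\eps_0$ of extra regularity that a slice-by-slice trace argument consumes. However, two of your steps hide real difficulties that the paper's argument is specifically built to avoid. First, the bi-Lipschitz (in fact roughly $C^{\lceil r+1\rceil}$, since you differentiate the tubular chart up to order $r+1$) parametrization $\Phi:\Gamma_{trim}\times(-\delta_0,\delta_0)\to T$ with slice-uniform trace constants does not follow from the standing hypotheses: the paper only assumes $\Omega$ is Lipschitz, and its own test cases (the L-shaped domain, trimming curves meeting $\partial\Omega_0$) have corners on $\Gamma_{trim}$, where no injective tubular chart of uniform width exists. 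Second, your treatment of fractional $r+1$ by real interpolation requires identifying $\left(H^m(S_h),H^{m+1}(S_h)\right)_{\theta,2}$ with $H^{m+\theta}(S_h)$ \emph{with constants uniform in $h$}; since the target domains $S_h$ are strips of width $\mathcal O(h)$, this identification is not standard and is precisely the sort of degeneration that must be ruled out. Rounding $r+1$ up to an integer does not rescue you: for $r$ close to $p-\frac{1}{2}$ one gets $\lceil r+1\rceil=p+1$, and the slice trace at order $p+1$ would demand $u\in H^{p+\frac{3}{2}+\eps_0}(\Omega)$, more regularity than is available.

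The paper sidesteps both issues at once. It first uses $r<p$ to pass to the integer order $p+1$, writing $\norm{\tilde u}_{H^{r+1}(S_h)}\le\norm{\tilde u}_{H^{p+1}(S_h)}$, and then invokes Lemma~\ref{lemma_ineq} with $s=\frac{1}{2}-\eps$: the gain in $h$ comes from Hardy's inequality (Lemma~\ref{hardy_ineq}), $\norm{v}_{L^2(\Omega\setminus\overline\Omega_1)}\le Ch\norm{\nabla v}_{L^2(\Omega)}$ for $v\in H^1_0(\Omega)$, interpolated against the trivial bound $\norm{v}_{L^2(\Omega\setminus\overline\Omega_1)}\le\norm{v}_{L^2(\Omega)}$. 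There the interpolation acts on the \emph{source} spaces over a fixed domain, where $\left(H^1_0,L^2\right)_{s,2}\cong H^s$ for $s<\frac{1}{2}$ is classical and $h$-free, while the target is plain $L^2$ of the strip, so no thin-domain interpolation arises; and only the distance function to the boundary is used, not a tubular foliation, so no smoothness of $\Gamma_{trim}$ beyond Lipschitz enters. If you wish to keep your slicing picture, you must either add an explicit smoothness hypothesis on $\Gamma_{trim}$ and prove the $h$-uniform interpolation identity on thin strips, or replace the slice-trace mechanism by the Hardy-inequality one.
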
	
\begin{proof}
Using the fact that $r < p$, we are able to recover an integer order for the Sobolev norm and so to apply Lemma~\ref{lemma_ineq} with $s=\frac{1}{2}-\eps$:
\begin{equation*}
\norm{\tilde u}_{H^{r+1}(S_h)}\le \norm{\tilde u}_{H^{p+1}(S_h)}\le C h_{}^{\frac{1}{2}-\eps}\norm{\tilde u}_{H^{p+\frac{3}{2}-\eps}( \Omega_0)}\le C h_{}^{\frac{1}{2}-\eps}\norm{u}_{H^{p+\frac{3}{2}-\eps}( \Omega)}.
\end{equation*}
In the last inequality we used the boundedness of the Sobolev-Stein extension operator.
\end{proof}
\begin{proposition}\label{prop7.4}
Let $u\in H^{p+1}(\Omega)$ be the solution to \eqref{3} and $u_h\in \tilde{V}_h$ solution to \eqref{40}, obtained using the stabilization in the parametric domain of Definition \ref{stabilization_parametric}. Then, the following error estimate holds:
\begin{equation*}
\norm{u-u_h}_{1,h,\Omega}\le C h_{}^{p'} \norm{u}_{H^{p'+\frac{3}{2}}(\Omega)}\qquad\forall\ 0\le p'<p.
\end{equation*}
\end{proposition}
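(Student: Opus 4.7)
The plan is to combine Theorem~\ref{apriori_error_estimate}, applied with $k=p$, with the improved narrow-strip inequality of Lemma~\ref{lemma_improved_ineq}, and then optimize the two free parameters against the target exponent $p'$.

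First, I would invoke Theorem~\ref{apriori_error_estimate} with $k = p$ (legitimate since $u \in H^{p+1}(\Omega)$), obtaining for every $0 \le r < p - \frac{1}{2}$
\begin{equation*}
\|u - u_h\|_{1,h,\Omega} \le C \left(h^{p}\|u\|_{H^{p+1}(\Omega)} + h^{r}\|\tilde u\|_{H^{r+1}(S_h)}\right).
\end{equation*}
The first term already has the optimal rate $h^p$; the sub-optimality is entirely due to the second stabilization term, which in the parametric version of the estimate is subject to the strict constraint $r < p - \frac{1}{2}$.

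Second, I would apply Lemma~\ref{lemma_improved_ineq} to the narrow-strip norm, trading an arbitrarily small amount of regularity for a factor of almost $h^{\frac{1}{2}}$: for every $\eps > 0$,
\begin{equation*}
\|\tilde u\|_{H^{r+1}(S_h)} \le C h^{\frac{1}{2} - \eps}\|u\|_{H^{p + \frac{3}{2} - \eps}(\Omega)},
\end{equation*}
so that
\begin{equation*}
\|u - u_h\|_{1,h,\Omega} \le C \left(h^{p}\|u\|_{H^{p+1}(\Omega)} + h^{r + \frac{1}{2} - \eps}\|u\|_{H^{p + \frac{3}{2} - \eps}(\Omega)}\right).
\end{equation*}

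Third, given $0 \le p' < p$, I would tune $r$ and $\eps$ so as to produce the right-hand side of the claim. In the relevant regime $p - \frac{1}{2} < p' < p$ I set $\eps := p - p' \in (0,\frac{1}{2})$, which collapses the Sobolev index on the right to exactly $p' + \frac{3}{2}$, and then pick $r = p - \frac{1}{2} - \delta$ for an arbitrarily small $\delta > 0$; the stabilization exponent then equals $p' - \delta$. Since the statement is quantified over every $p' < p$, this small deficit can be reabsorbed by renaming $p' \mapsto p' + \delta$, which still belongs to $[0, p)$. The leading $h^p\|u\|_{H^{p+1}(\Omega)}$ contribution is dominated via $h^p \le h^{p'}$ (for $h \le 1$) together with the embedding $H^{p'+\frac{3}{2}}(\Omega) \hookrightarrow H^{p+1}(\Omega)$, which is valid precisely in this regime. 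The complementary range $p' \le p - \frac{1}{2}$ is handled more directly by applying Theorem~\ref{apriori_error_estimate} with $r = p'$ and controlling $\|\tilde u\|_{H^{p'+1}(S_h)}$ by the Stein extension and Sobolev embedding.

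The main obstacle is that the constraint $r < p - \frac{1}{2}$ in Theorem~\ref{apriori_error_estimate} and the strict positivity $\eps > 0$ in Lemma~\ref{lemma_improved_ineq} cannot be saturated simultaneously: the gap $p - (r + \frac{1}{2} - \eps) > 0$ is precisely what prevents reaching $p' = p$ in the conclusion, and is the analytic manifestation of the sub-optimality of the parametric-domain stabilization compared to the physical-domain one.
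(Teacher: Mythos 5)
Your proposal is correct and takes essentially the same route as the paper, whose proof of Proposition~\ref{prop7.4} is literally the one-line combination of Theorem~\ref{apriori_error_estimate} (with $k=p$) and Lemma~\ref{lemma_improved_ineq}. Your explicit tuning of $r$ and $\eps$ is in fact more careful than the paper's ``it immediately follows,'' and it correctly exposes the $\eps$--$r$ trade-off (one cannot saturate $r=p-\tfrac{1}{2}$ and $\eps=p-p'$ simultaneously) that is the analytic reason the conclusion is restricted to $p'<p$.
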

\begin{proof}
It immediately follows combining Theorem \ref{apriori_error_estimate} and Lemma \ref{lemma_improved_ineq}.
\end{proof}
\begin{remark}
At the prize of slightly higher regularity request, optimal convergence rate is to be expected also for stabilization in Definition \ref{stabilization_parametric}.
\end{remark}

\section{Numerical examples}\label{section_numerical_examples}
\subsection{Some details about the implementation}
For accurate numerical integration, we decompose the trimmed elements into smaller quadrilateral tiles where we compute the integrals. These tiles are reparametrized as B\'ezier surfaces of the same degree $p$ as the approximation space used to discretize our PDE, see \cite{pablo} for a detailed explanation. We remark that this reparametrization is also used to compute the boundary integrals. 

In order to compute the stabilization terms appearing in \eqref{40}, first of all for each bad trimmed element $K$ we choose $K'$: among all the neighbours of $K$, we choose (the) one with the largest relative overlap $\abs{K'\cap\Omega}/\abs{K'}$.
Then we need to locally project functions living in $K'$ (or in $Q'$) onto the space of polynomials on $K'$ (or $Q'$) and extend them up to $\Gamma_K$. For the stabilization in the parametric domain, by taking as a basis the Bernstein polynomials on $Q'$ the projection can be computed by knot insertion, while for the stabilization in the physical domain the $L^2$-projection is needed anyhow.

\subsection{Validation of stability} \label{section:square_epsilon}
Let us repeat the numerical experiment of Section \ref{section_stability} in order to validate the effectiveness of our stabilization technique. Let us solve the eigenvalue problem~\eqref{GEPbis} with the stabilization in Definition \ref{stabilization_parametric} (since $\mathbf{F}=\mathbf{Id}$, the two proposed stabilizations techniques are equivalent) in the trimmed domain of Figure \ref{fig:mesh} for the same values of $\eps$ used in Section~\ref{section_stability}. 
The result is shown in Figure~\ref{fig_eigs_good}.

\begin{figure}[!htbp]
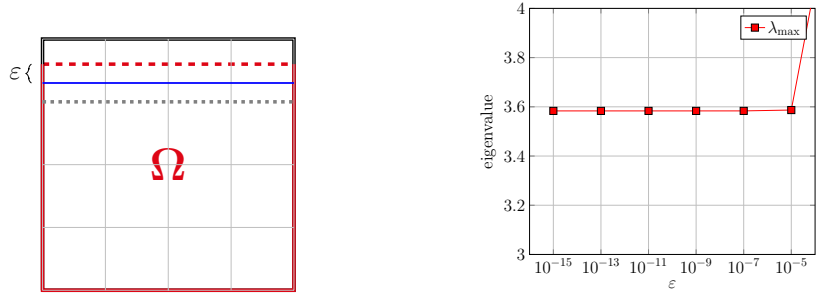

	\centering
\subfloat[][Sketch of the setting with exaggerated distances. We plot $\Omega=(0,1)\times(0,0.757)$ (in red), $\{(x,y): y=\overline\xi\}$ (in solid blue), $\{(x,y): y=0.75\}$ (in dotted gray).\label{fig:mesh}]
	{
		\includestandalone[width=0.3\textwidth,keepaspectratio=true]{mesh_epsilon}
	}
	\hspace{2cm}
	\subfloat[][Maximum eigenvalue vs $\varepsilon$.\label{fig_eigs_good}]
	{
		\includestandalone[width=0.35\textwidth,keepaspectratio=true]{lambda_stab}
	}
	\caption{Testing the stability of formulation \eqref{40} with respect to trimming. }
\end{figure}

This time we observe that the spectrum remains bounded independently of $\eps$, confirming our method to be stable.
\subsection{Validation of the a priori error estimate}\label{sec:apriori}
In the following we focus on the Poisson problem \eqref{40} with the difference that, while we impose Dirichlet boundary conditions weakly on the trimmed parts of the boundary, on the other parts where the mesh is fitted with the boundary we impose them in the strong sense.

\textbf{Test 1.} Let $\Omega=\Omega_0\setminus\overline\Omega_1$ be defined as in Figure \ref{domain_fig2bis}, where $\Omega_0 = {\bf F}((0,1)^2)$ is a quarter of annulus ($\F$ is non linear) constructed with biquadratic NURBS, and $\Omega_1$ is the image of a ball in the parametric domain through the isogeometric map, namely  $\Omega_1=\F(B(0,r))$, with $r=0.76$. We consider as manufactured solution
$
u_{ex}(x,y)=e^x\sin(xy).
$
We solve the Poisson problem using the stabilized formulation \eqref{40}, the stabilization in the parametric domain and the parameters $\beta=1$ and $\theta=0.1$. The results of convergence for different values of $p$, that are displayed Figure \ref{order_conv_fig2bis}, show that we obtain the optimal order of convergence.

\begin{figure}[!ht]
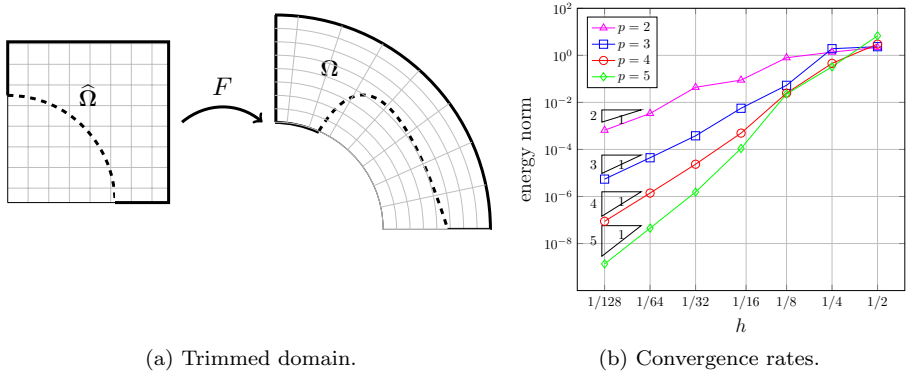

\centering
\subfloat[][Trimmed domain.]
{
  \includestandalone[width=0.5\textwidth,keepaspectratio=true]{quarter_annulus_trim}\label{domain_fig2bis}
  }
  \subfloat[][Convergence rates.]
{
  \includestandalone[width=0.4\textwidth,keepaspectratio=true]{conv_ringhole_nnorm_parametric_01}\label{order_conv_fig2bis}
  }
  \caption{Geometry and convergence rates for the quarter of annulus with hole.}
\end{figure}

\textbf{Test 2.} We now consider the Poisson problem in the L-shaped domain shown in Figure \ref{Lshaped}, given by $\Omega=\Omega_0\setminus\overline\Omega_1$, where $\Omega_0=(-2,1)\times (-1,2)$ and $\Omega_1=(0,1)\times(-1,0)$. The exact solution is chosen as the singular function that, in polar coordinates, reads as $u(r,\varphi)=r^{\frac{2}{3}}\sin\left( \frac{2}{3}\varphi\right)\in H^{\frac{5}{3}-\delta}(\Omega)$, for every $\delta>0$. The function has a singularity at the re-entrant corner in the origin, and the domain is chosen in such a way that the corner is always located in the interior of an element. We employ the formulation \eqref{40} together with the stabilization operator in Definition \ref{stabilization_parametric}, noting that since the parametrization is a simple scaling, both stabilizations are equivalent. This time we set the parameters $\theta=1$ and, due to the presence of the singularity, $\beta=(p+1)\cdot 10$. The numerical results of Figure \ref{Lshapedconv} agree with the theory as the method converges with order $\frac{2}{3}$, and the sub-optimal behaviour is due to the low regularity of the reference solution.

\begin{figure}[!ht]
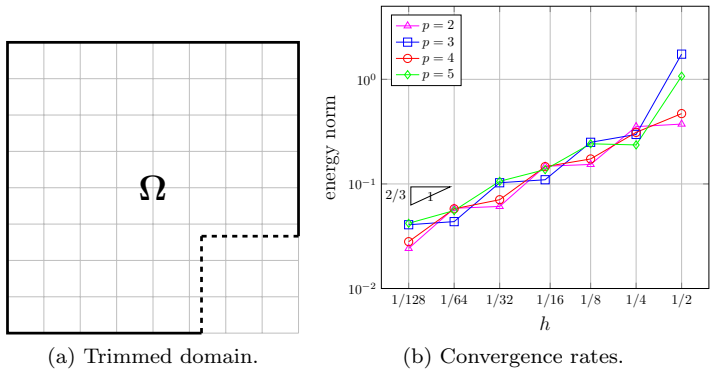

\centering
\subfloat[][Trimmed domain.]
{
 \includestandalone[width=0.3\textwidth,keepaspectratio=true]{L_shaped_domain}\label{Lshaped}
  }
  \subfloat[][Convergence rates.]
{
  \includestandalone[width=0.4\textwidth,keepaspectratio=true]{conv_Lshaped_nnorm_parametric_01}\label{Lshapedconv}
  }
  \caption{Geometry description and convergence rates for the L-shaped domain.}
\end{figure}

%

%

\textbf{Test 3.} The goal of this test is to show that, when the regularity of the mapping $\F$ is low between trimmed elements and their neighbors, the stabilization in the physical domain is more effective than the ones based on polynomial extensions in the parametric domain (as it is the case for our stabilization in the parametric domain, but also for the method proposed in \cite{MARUSSIG201879}).
Let us consider again as the domain $\Omega_0$ the quarter of annulus, this time parametrized with a different map ${\bf F}$: starting from the standard biquadratic NURBS parametrization, we perform knot insertion adding the knot $\xi = 0.75$, with multiplicity $2$, in the direction corresponding to the angular coordinate, that corresponds to the thick black line in Figure~\ref{quarter_mod}. In order to get a geometry of class $C^0$, we set the second coordinate of one control point, highlighted in Figure~\ref{quarter_mod_ctrl}, equal to $0.5$ in homogeneous coordinates.
\begin{figure}[ht]
\centering
\subfloat[][Trimming line in red and dashed, $C^0$ knot line in thick black.]
{
   \includegraphics[width=0.45\textwidth,keepaspectratio=true]{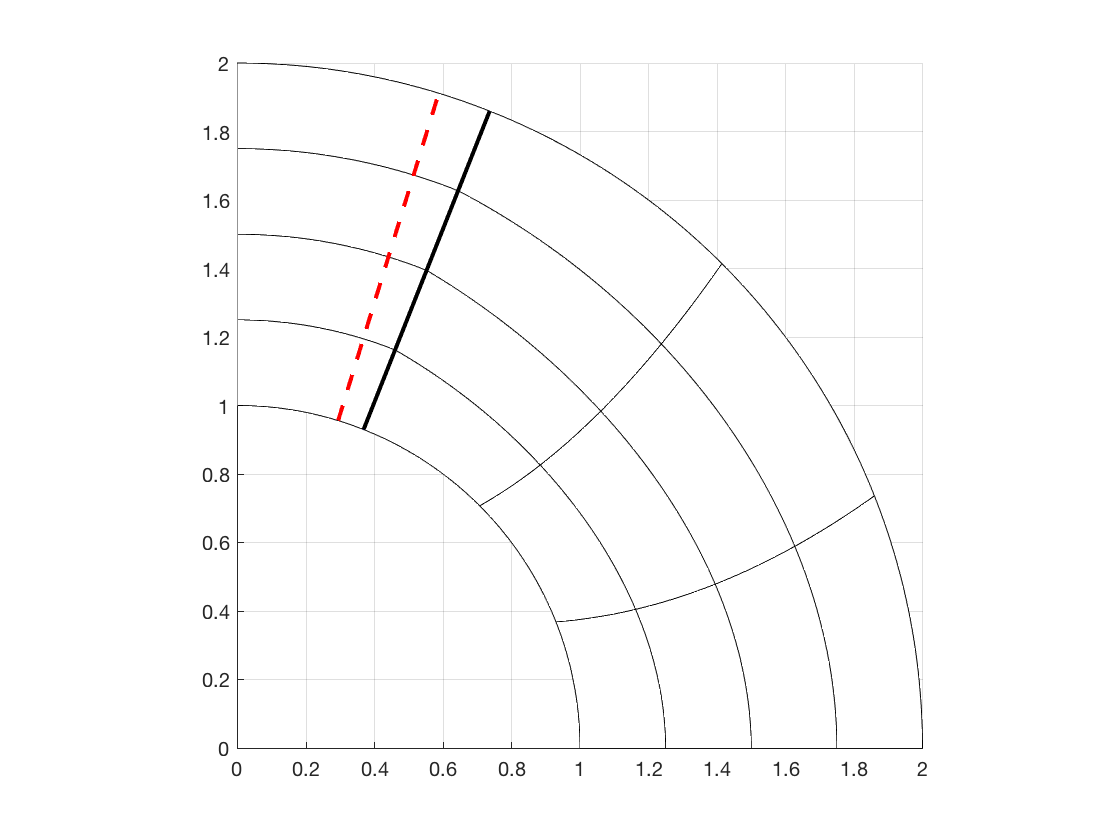}\label{quarter_mod}
  }
  \subfloat[][Control points.]
{
  \includegraphics[width=0.45\textwidth,keepaspectratio=true]{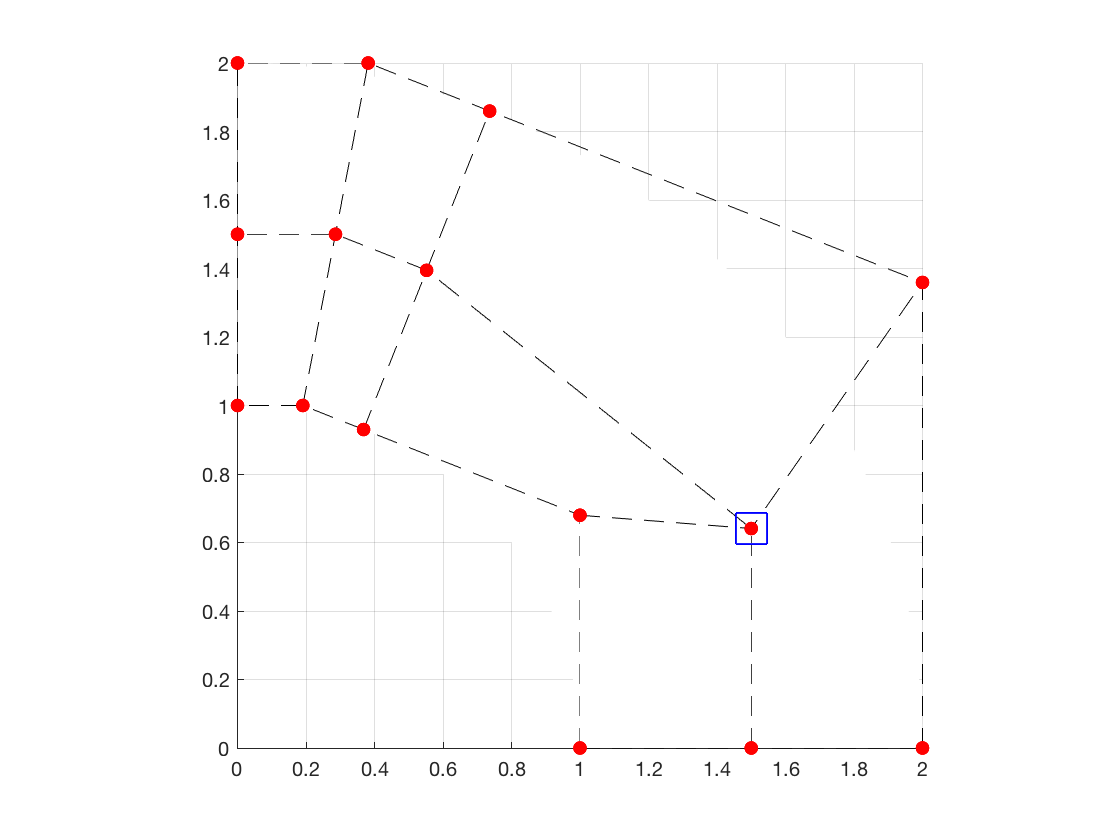}\label{quarter_mod_ctrl}
  }
  \caption{Lower inter-regularity parametrization of the quarter of annulus.}
\end{figure}
Note that the new parametrization is only of class $C^0$ in correspondence of the knot line given by $\F(\{(x,y): x\in(0,1),\, y=0.75 \})$. To ensure that this knot line is located between $K$ and $K'$, we define the trimmed domain as $\Omega =\F\left( (0,1)\times (0,0.75+\eps)\right)$, with $\eps=10^{-8}$. Here we set $\theta=1$ and, because of the lower regularity of the parametrization, $\beta=(p+1)\cdot 25$.
 We know from Remarks~\ref{remark1} and~\ref{remark6.7} that the convergence rate deriving from the stabilization in Definition~\ref{stabilization_parametric} (and any stabilization based on polynomial extensions in the parametric domain) may suffer of sub-optimality. In particular, from Figure \ref{Lshaped_conv_param}, we see that the error with the stabilization in the parametric domain is converging just as $h^{\frac{1}{2}}$ for any degree $p$, while in Figure \ref{Lshaped_conv_phys} we observe that the desired convergence rates are reached when using the stabilization in the physical domain.
\begin{figure}[!ht]
\centering
\subfloat[][Stabilization in Definition \ref{stabilization_parametric}.]
{
   \includegraphics[width=0.4\textwidth,keepaspectratio=true]{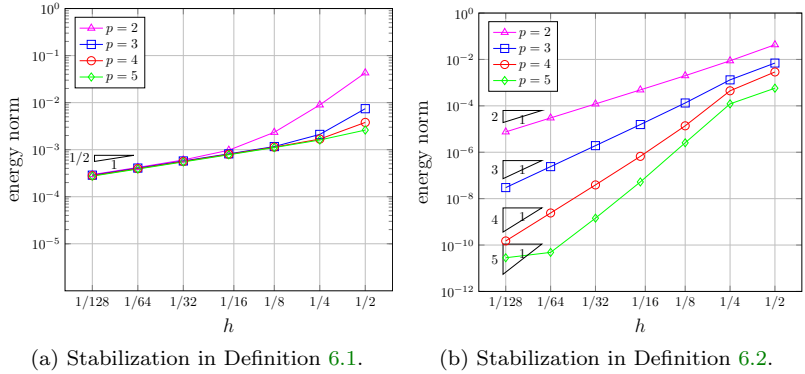}\label{Lshaped_conv_param}
  }
  \subfloat[][Stabilization in Definition \ref{stabilization_physical}.]
{
  \includestandalone[width=0.4\textwidth,keepaspectratio=true]{conv_ring_horiz_cut_mod_physical_1}\label{Lshaped_conv_phys}
  }
  \caption{Comparison of the two stabilizations when $\F$ has lower regularity.}
\end{figure}
\subsection{Conditioning}\label{subsection_ns_cond}
Even if an exhaustive discussion about the conditioning of the stiffness matrix in trimmed geometries is beyond the scope of this work (for a more detailed discussion on the topic see, for instance, \cite{DEPRENTER2017297,Prenter:2020aa}), we would like to present some numerical experiments for the sake of completeness. We focus again on the formulation \eqref{40} of the Poisson problem. Again, we impose Dirichlet boundary conditions weakly on the trimmed parts of the boundary, and strongly on the fitted parts.

\textbf{Test 1.}
Let us come back to the quarter of annulus with a hole and, as above, we employ B-splines of degree $p=3$. In Figure~\ref{fig_cond_final1} we show that our stabilization coupled with a simple diagonal scaling, which can be interpreted as a left-right Jacobi preconditioner, is able to solve the conditioning issue. In Figure~\ref{fig_cond_final2} we compare the effectiveness of the diagonal rescaling with and without the stabilization, and we observe that the effect of the stabilization is marginal with respect to the one of the diagonal preconditioner. The stabilization used is the one in the parametric domain with $\beta=1$ and $\theta=0.1$.


\begin{figure}[!ht]
\centering
\subfloat[][]
   {
   \includegraphics[width=0.4\textwidth,keepaspectratio=true]{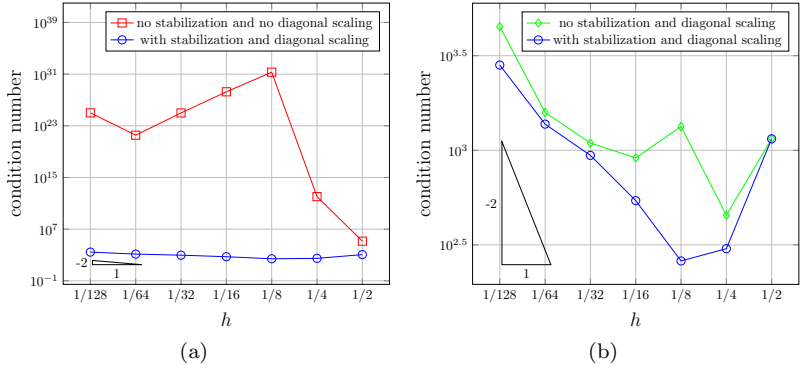}\label{fig_cond_final1}
  }
\subfloat[][]
   {
   \includestandalone[width=0.4\textwidth,keepaspectratio=true]{cond_numb_pc_ring_comparison}\label{fig_cond_final2}
   }
   \caption{Condition number versus $h$ in quarter of annulus geometry.}
\end{figure}

\textbf{Test 2.} Let us consider the same configuration as in the test of Figure~\ref{domain_eps}, for which we notice again that the two stabilizations are equivalent. Let us take B-splines of degree $p=3$, as mesh size $h=2^{-5}$, and set the penalization parameter $\beta=1$. After a simple diagonal rescaling as preconditioner, we compare the condition number of the stiffness matrix, as a function of $\eps$, obtained for the non-stabilized ($\theta=0$) and the stabilized ($\theta=1$) formulations. Note that as the ratio in Definition \ref{def_goodandbad} is the same for all cut elements, it is sufficient to consider only these two values of $\theta$.
The results in Figure \ref{figure_cond_vs_eps} show the diagonal rescaling is acting as a robust preconditioner with respect to the size of the trimming. 
Then, we perform uniform dyadic refinement and we plot the condition number as a function of the mesh-size $h$, obtaining the plots in Figures \ref{jakko1} and \ref{jakko2}.
The results suggest a better behaviour of the condition number when a stabilized formulation is employed to solve the problem.  
\begin{figure}[!ht]
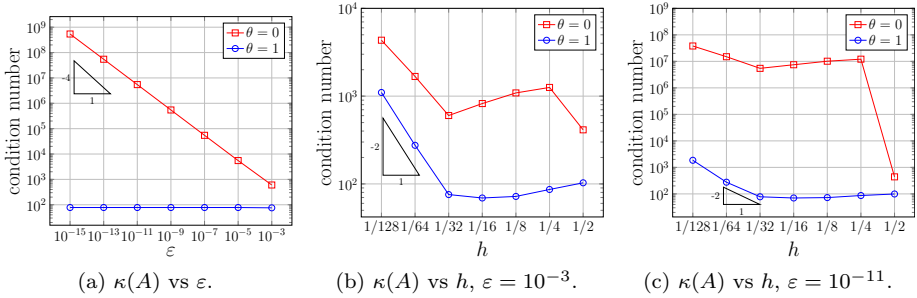

	\centering
	\subfloat[][$\kappa(A)$ vs $\eps$.]
	{
		\includegraphics[width=0.3\textwidth,keepaspectratio=true]{cond_numb_vs_eps.tex}  \label{figure_cond_vs_eps}
	}
	\subfloat[][$\kappa(A)$ vs $h$, $\eps=10^{-3}$.]
	{
		\includegraphics[width=0.3\textwidth,keepaspectratio=true]{cond_numb_vs_theta_eps0,001.tex}\label{jakko1}
	}
	\subfloat[][$\kappa(A)$ vs $h$, $\eps=10^{-11}$.]
	{
		\includestandalone[width=0.3\textwidth,keepaspectratio=true]{cond_numb_vs_theta_eps1e-11}\label{jakko2}
	}
	\caption{Condition number study in the domain of Figure \ref{domain_eps}.} 
\end{figure}  

\textbf{Test 3.} This test is inspired by \cite{DEPRENTER2017297}.  Let us embed $\Omega = \left(0.19,0.78\right) \times \left(0.22,0.78\right)$ in the untrimmed domain $\Omega_0=(0,1)^2$ with un underlying mesh of size $h=2^{-3}$. We consider B-splines of degree $p=2$. Now, let us rotate $\Omega$ around its barycenter for different angles $\alpha$ (see Figure~\ref{rot_square}). For each $\alpha=i\frac{\pi}{200}$, $i=0,\dots,100$ we face a specific trimming configuration where there may appear B-splines whose support intersects in a ``pathological way'' the domain $\Omega$. Let us denote the ``smallest volume fraction'' $\eta:=\min_{K\in\mathcal G_h}\abs{\Omega\cap K}$. In Figure~\ref{fig:rot_square_cond} we plot the condition number of the stiffness matrix against  the smallest volume fraction, in order to compare the non stabilized case with the stabilized (with parameter $\theta=0.5$) and diagonally rescaled one. Let us observe that even if the behaviour of the condition number appears to be much better after stabilization and diagonal rescaling, it is still strongly affected from the way the mesh is cut by the trimming boundary. In this regard this is a counter-example to the fact that diagonal rescaling, together with our stabilization, is a robust preconditioner with respect to the trimming operation. This will be object of further investigations by the authors in the future.
	\begin{figure}[!ht]
	\centering	
	\subfloat[][$\alpha=0, \frac{\pi}{10}, \frac{\pi}{5}, \frac{3\pi}{10},\frac{2\pi}{5},\frac{\pi}{2}$.]
	{
		\includegraphics[scale=.3]{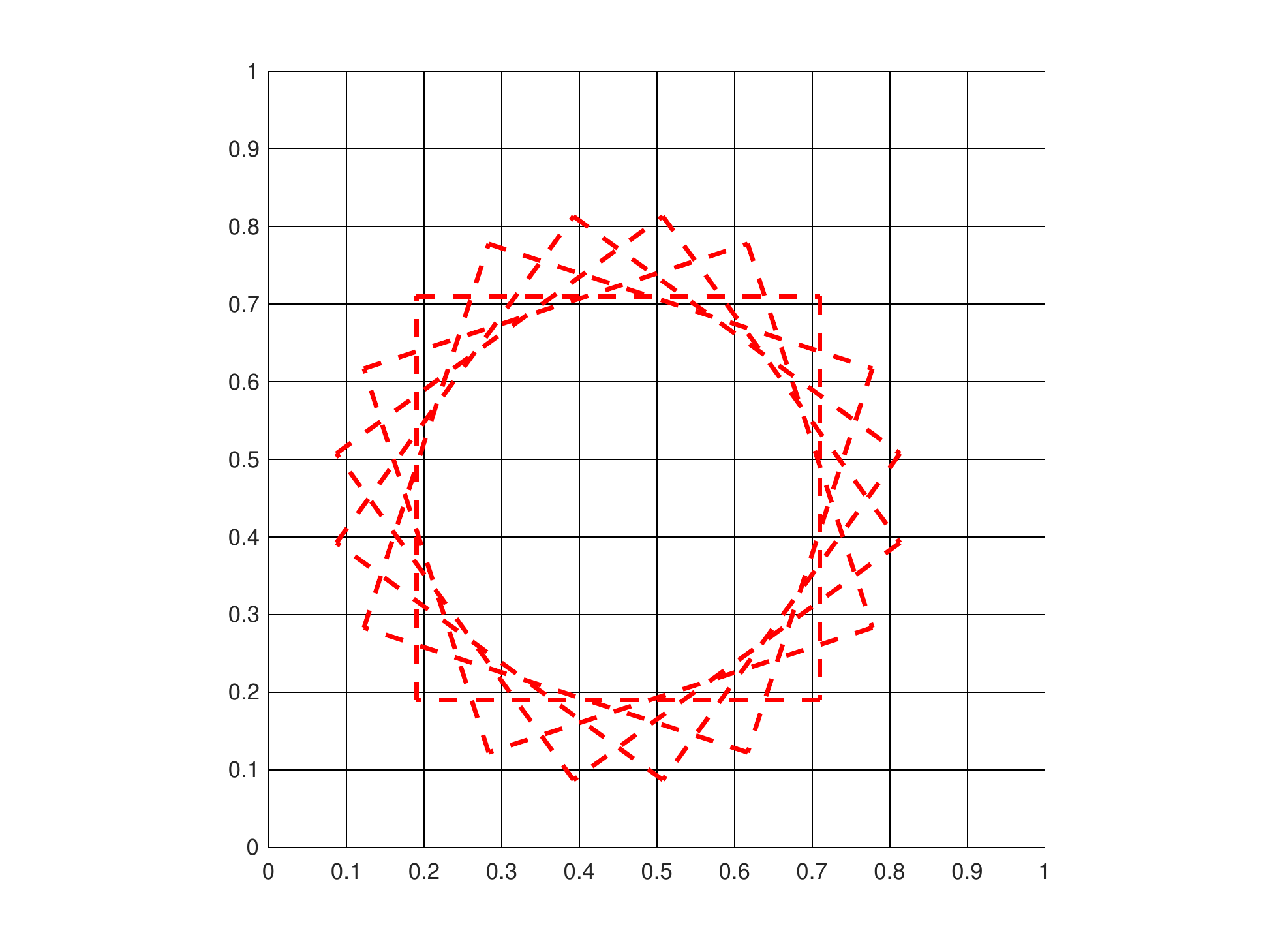}\label{rot_square}
	}
\subfloat[][Condition number vs $\eta$.]
{
\includegraphics[scale=.5]{cond_numb_vs_eta_rotating_square.tex}\label{fig:rot_square_cond}
}
\caption{Condition number the \emph{rotating square}.}
\end{figure}

\appendix

\section{Auxiliary theoretical results} \label{sec:appendix}
\begin{lemma}\label{lemma_hansbo}
	There exists $C>0$ depending on $\Gamma_D$, but independent of the mesh-boundary intersection, such that for every $K\in\mathcal{M}_{0,h}$
	\begin{equation*}
	\norm{v}^2_{L^2(\Gamma_K)}\le C\left( \norm{\mathsf{h}^{-\frac{1}{2}}v}^2_{L^2(K)}+\norm{\mathsf{h}^{\frac{1}{2}}\nabla v}^2_{L^2(K)}\right)\qquad\forall\ v\in H^1(K).
	\end{equation*}
\end{lemma}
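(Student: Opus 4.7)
The plan is to prove the estimate by a standard scaling argument combined with a reference-element trace inequality whose constant depends only on the global Lipschitz character of $\Gamma_D$. First I would work elementwise: fix $K\in\mathcal M_{0,h}$, translate so that (say) its barycenter lies at the origin, and rescale by $h_K$ to obtain a reference element $\hat K$ of unit diameter together with a rescaled piece $\hat\Gamma_{\hat K}$ of the trimmed boundary. Under Assumption~\ref{shape_regularity}, the resulting family of reference elements is shape-regular, and the Lipschitz constant of $\hat\Gamma_{\hat K}\subset\hat\Gamma_D$ is controlled by the global Lipschitz constant of $\Gamma_D$, uniformly in $K$ and in the way $\Gamma_D$ cuts $K$.

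Next I would establish the trace inequality on the reference configuration: for $\hat v\in H^1(\hat K)$,
\[
\|\hat v\|^2_{L^2(\hat\Gamma_{\hat K})}\le C\bigl(\|\hat v\|^2_{L^2(\hat K)}+\|\hat\nabla \hat v\|^2_{L^2(\hat K)}\bigr),
\]
with $C$ independent of the particular cut. This can be obtained, for instance, by applying the Sobolev--Stein extension of $\hat v$ from $\hat K$ to a fixed neighbourhood and invoking the continuity of the trace operator from $H^1$ onto a bounded Lipschitz hypersurface, whose $(d-1)$-dimensional measure is uniformly controlled by Assumption~\ref{mesh_assumptions}. Alternatively, one may argue directly via the divergence theorem applied to $\hat v^{2}\,\mathbf b$, with $\mathbf b$ a smooth vector field whose normal component equals one on $\hat\Gamma_{\hat K}$ and whose $C^1$-norm depends only on the global Lipschitz constant of $\Gamma_D$; this produces exactly the required bound after Cauchy--Schwarz.

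Finally, I would scale back to $K$ via the change-of-variable identities
\[
\|v\|^2_{L^2(\Gamma_K)}=h_K^{d-1}\|\hat v\|^2_{L^2(\hat\Gamma_{\hat K})},\quad \|v\|^2_{L^2(K)}=h_K^{d}\|\hat v\|^2_{L^2(\hat K)},\quad \|\nabla v\|^2_{L^2(K)}=h_K^{d-2}\|\hat\nabla \hat v\|^2_{L^2(\hat K)}.
\]
Multiplying the reference inequality by $h_K^{d-1}$ and rewriting in terms of the piecewise-constant mesh function $\mathsf h$ (using $\restr{\mathsf h}{K}=h_K$), the two terms on the right-hand side become precisely $\|\mathsf h^{-1/2}v\|^2_{L^2(K)}$ and $\|\mathsf h^{1/2}\nabla v\|^2_{L^2(K)}$, yielding the claimed bound.

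The main obstacle is the reference-element trace inequality: the delicate point is to ensure that its constant depends only on the global Lipschitz structure of $\Gamma_D$ and the shape-regularity of the mesh, but not on the particular slicing of $K$ by the trimming curve. The bound $|\Gamma_K|\le C h_K^{d-1}$ from Assumption~\ref{mesh_assumptions} is exactly what rules out pathological configurations in which the cut piece could become arbitrarily large or develop uncontrolled oscillations after rescaling.
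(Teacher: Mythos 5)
Your argument is correct, and it essentially reconstructs the proof of the result that the paper simply outsources: the paper's entire proof is a citation to Lemma~3 of Hansbo--Hansbo \cite{Hansbo434217}, which is established there by the same kind of scaling to a unit-size configuration plus a transversality/trace argument for the cut curve, with the constant controlled by the Lipschitz character of the interface rather than by how it slices the element. The only cosmetic point is that in your divergence-theorem variant you should only require $\mathbf b\cdot\mathbf n\ge c>0$ on $\hat\Gamma_{\hat K}$ (which a Lipschitz $\Gamma_D$ guarantees via a local cone condition) rather than $\mathbf b\cdot\mathbf n=1$ exactly; otherwise the scaling identities and the role of Assumption~\ref{mesh_assumptions} are exactly as you state.
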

\begin{proof}
	It follows straightforward from Lemma 3 in \cite{Hansbo434217}.
\end{proof}
\begin{corollary}\label{corollary_trace}
	There exists $C>0$ depending on $\Gamma_D$, but independent of the mesh-boundary intersection, such that for every $K\in\mathcal{M}_{0,h}$
	\begin{equation*}
	\norm{\frac{\partial v_h}{\partial n}}_{L^2(\Gamma_K)}\le C \norm{\mathsf{h}^{-\frac{1}{2}}\nabla v_h}_{L^2(K)}\qquad\forall\ v_h\in \tilde{V}_h.
	\end{equation*}
\end{corollary}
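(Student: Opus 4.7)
The plan is to apply Lemma~\ref{lemma_hansbo} to the scalar function $w := \partial v_h/\partial n$, which is smooth on the full (un-trimmed) element $K\in\mathcal M_{0,h}$ because $v_h$ is piecewise a polynomial under pull-back by $\mathbf{F}$. Lemma~\ref{lemma_hansbo} then yields
\[
\norm{\partial v_h/\partial n}^2_{L^2(\Gamma_K)} \le C\Big(\norm{\mathsf h^{-\frac{1}{2}}\,\partial v_h/\partial n}^2_{L^2(K)} + \norm{\mathsf h^{\frac{1}{2}}\,\nabla(\partial v_h/\partial n)}^2_{L^2(K)}\Big),
\]
with $C$ independent of the mesh--boundary intersection.

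First I would observe that, since $\mathbf n$ is a unit vector, $\abs{\partial v_h/\partial n}\le \abs{\nabla v_h}$ pointwise on $K$, so the first term on the right is controlled by $\norm{\mathsf h^{-\frac{1}{2}}\nabla v_h}^2_{L^2(K)}$, as required. Next I would invoke a standard inverse estimate for splines on shape-regular meshes, namely $\norm{D^2 v_h}_{L^2(K)}\le C h_K^{-1}\norm{\nabla v_h}_{L^2(K)}$. This inequality is classical in the parametric domain for piecewise polynomials of bounded coordinate degree on the shape-regular mesh of Assumption~\ref{shape_regularity}, and is transferred to the physical element via Assumption~\ref{1}, which provides a bi-Lipschitz $\mathbf{F}$ with uniformly bounded derivatives and Jacobian determinant bounded away from zero. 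Together with $\restr{\mathsf h}{K}=h_K$, this converts the second term into $C\norm{\mathsf h^{-\frac{1}{2}}\nabla v_h}^2_{L^2(K)}$.

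Summing the two contributions and extracting the square root then gives the desired inequality. I do not anticipate genuine obstacles in this argument; the only subtlety worth watching is ensuring that every constant is independent of the trimming configuration. This is automatic: Lemma~\ref{lemma_hansbo} already delivers such a constant by hypothesis, and the inverse estimate is stated on the full un-trimmed element $K$, so it cannot see $\Gamma_{trim}$ at all.
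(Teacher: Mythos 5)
Your proposal is correct and follows essentially the same route as the paper: apply Lemma~\ref{lemma_hansbo} to the normal derivative (equivalently, componentwise to $\nabla v_h$), bound the zeroth-order term by $\abs{\partial v_h/\partial n}\le\abs{\nabla v_h}$, and absorb the $D^2 v_h$ term with the standard inverse inequality on the full un-trimmed element. The only cosmetic difference is that you spell out the pull-back/scaling justification of the inverse estimate, which the paper delegates to a citation of \cite{BAZILEVS_1}.
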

\begin{proof}
	Let us apply Lemma~\ref{lemma_hansbo}:
	\begin{equation*}
	\norm{\frac{\partial v_h}{\partial n}}^2_{L^2(\Gamma_K)}\le C \left( \norm{\mathsf{h}^{-\frac{1}{2}}\nabla v_h}^2_{L^2(K)}+\norm{\mathsf{h}^{\frac{1}{2}}D^2 v_h}^2_{L^2(K)}\right)\qquad\forall\ v_h\in \tilde{V}_h.
	\end{equation*}
	By a standard inverse inequality, see \cite{BAZILEVS_1}, we get
	\begin{equation*}
	\norm{D^2 v_h}_{L^2(K)}^2\le C\norm{ \mathsf{h}^{-1}\nabla v_h}^2_{L^2(K)}\qquad\forall\ v_h\in \tilde{V}_h,
	\end{equation*}
	where $C>0$ depends on the shape regularity constant of the un-trimmed mesh $\mathcal{M}_{0,h}$. 	
\end{proof}	
\begin{lemma}\label{51}
	Let $Q,Q'\in\hat{\mathcal{M}}_{0,h}$ be neighbor elements in the sense of Definition~\ref{def_goodandbad}. There exists $C>0$ such that
	\begin{equation*}
	\norm{p}_{L^\infty(Q)}\le C\norm{p}_{L^\infty(Q')}\qquad\forall\ p\in\mathbb{Q}_k(\R^d),
	\end{equation*}
	where $C$ depends on $k$, on the shape regularity of the mesh and on the distance between $Q$ and $Q'$.
\end{lemma}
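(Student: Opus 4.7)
The plan is to pass to a reference configuration where both elements have unit-order size via affine rescaling, and then exploit the finite dimensionality of $\mathbb{Q}_k(\mathbb R^d)$ to obtain a uniform equivalence of the two $L^\infty$-norms.

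First I would fix a reference point $\hat x_0$ (say the center of $Q$) and introduce the affine map $\psi(\hat x) := (\hat x - \hat x_0)/h_Q$. Setting $\tilde Q := \psi(Q)$, $\tilde Q' := \psi(Q')$, and $\tilde p := p \circ \psi^{-1}$, we note that $\tilde p \in \mathbb{Q}_k(\mathbb R^d)$ and that
\[
\norm{p}_{L^\infty(Q)} = \norm{\tilde p}_{L^\infty(\tilde Q)}, \qquad \norm{p}_{L^\infty(Q')} = \norm{\tilde p}_{L^\infty(\tilde Q')},
\]
because $L^\infty$-norms are invariant under affine reparametrization. By the shape-regularity Assumption~\ref{shape_regularity}, the diameters of $\tilde Q$ and $\tilde Q'$ are uniformly bounded above and below by constants depending only on the shape-regularity constant $\sigma$. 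By the definition of neighbor, $\operatorname{dist}(\tilde Q, \tilde Q')$ is also bounded by a constant depending on the proportionality entering $\mathcal N(K)$.

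Next, I would observe that on the finite-dimensional space $\mathbb{Q}_k(\mathbb R^d)$ both functionals $\norm{\cdot}_{L^\infty(\tilde Q)}$ and $\norm{\cdot}_{L^\infty(\tilde Q')}$ are genuine norms, since any polynomial vanishing on an open set is identically zero. They are therefore equivalent, and the equivalence constant depends continuously on the geometric parameters (positions, side lengths, orientations) of the axis-aligned boxes $\tilde Q$ and $\tilde Q'$. These parameters range within a compact set determined solely by $\sigma$ and the neighbor distance bound, so the equivalence constant is bounded by some $C = C(k,\sigma,D)$. Scaling back through $\psi$ concludes the proof.

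The main point requiring care is the uniformity of the constant across all admissible pairs $(\tilde Q, \tilde Q')$. The compactness argument above handles it cleanly; alternatively one can enclose $\tilde Q$ in a reference ball $B$ of fixed radius and invoke a Markov-type polynomial inverse estimate to bound $\norm{\tilde p}_{L^\infty(B)}$ in terms of $\norm{\tilde p}_{L^\infty(\tilde Q')}$, which would produce an explicit dependence on $k$.
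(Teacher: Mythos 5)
Your proof is correct and is precisely the ``standard scaling argument'' that the paper's one-line proof invokes (with a citation in place of details): rescale to a unit-size reference configuration, use equivalence of norms on the finite-dimensional space $\mathbb{Q}_k(\R^d)$, and obtain uniformity of the constant from compactness of the admissible rescaled configurations guaranteed by shape regularity and the neighbor-distance bound. Nothing to add.
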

\begin{proof}
The proof follows by a standard \emph{scaling argument} (see \cite{qvalli} for instance).
\end{proof}
The next one says that the $L^2$ norm on the cut portion of an element $Q$ controls the $L^\infty$ (and hence any other) norm on the whole element with an equivalence constant depending on the relative measure of the cut portion.
\begin{lemma}\label{50}
	Let $\theta\in (0,1]$. There exists $C>0$ such that for every $Q\in\hat{\mathcal{M}}_{0,h}$ and every $S\subset Q$ measurable such that $\abs{S} \ge \theta\abs{Q} $, we have
	\begin{equation*}
	\norm{p}_{L^\infty(Q)}\le C h^{-\frac{d}{2}}\norm{p}_{L^2(S)}\qquad\forall\ p\in\mathbb{Q}_k(\R^d),
	\end{equation*}
	where $C$ depends only on $\theta$, $k$ and the mesh regularity.
\end{lemma}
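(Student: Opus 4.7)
The plan is to reduce the estimate to an equivalent statement on a reference element, and then to establish the equivalence of norms on the finite-dimensional space $\mathbb{Q}_k(\mathbb{R}^d)$ uniformly over admissible cut subsets by a compactness/contradiction argument.

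First, I would use Assumption~\ref{shape_regularity} to affinely map $Q$ to a reference element $\hat{Q} := (0,1)^d$ via $\Phi_Q: \hat{Q} \to Q$. Under this change of variables, the $L^\infty$ norm of a polynomial is invariant, while the $L^2$ norm over any measurable subset transforms as $\|p\|_{L^2(S)} = |\det D\Phi_Q|^{1/2} \|p \circ \Phi_Q\|_{L^2(\Phi_Q^{-1}(S))} \approx h^{d/2} \|p\circ\Phi_Q\|_{L^2(\hat{S})}$, where $\hat{S} := \Phi_Q^{-1}(S)$ satisfies $|\hat{S}| \ge \theta |\hat{Q}| = \theta$. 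Since $\mathbb{Q}_k(\mathbb{R}^d)$ is stable under affine transformations, the statement reduces to proving: there exists $C = C(\theta,k,d) > 0$ such that
\begin{equation*}
\|\hat p\|_{L^\infty(\hat Q)} \le C \|\hat p\|_{L^2(\hat S)} \qquad \forall\, \hat p \in \mathbb{Q}_k(\mathbb{R}^d),\ \forall\, \hat S \subseteq \hat Q \text{ measurable with } |\hat S| \ge \theta.
\end{equation*}

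Next, I would argue by contradiction. Assuming the reference inequality fails, there exist sequences $\hat p_n \in \mathbb{Q}_k$ with $\|\hat p_n\|_{L^\infty(\hat Q)} = 1$ and measurable $\hat S_n \subset \hat Q$ with $|\hat S_n| \ge \theta$ such that $\|\hat p_n\|_{L^2(\hat S_n)} \to 0$. Since the unit sphere of $\mathbb{Q}_k$ in $L^\infty(\hat Q)$ is compact (finite-dimensional space, all norms equivalent), up to a subsequence $\hat p_n \to \hat p^*$ uniformly on $\hat Q$ with $\|\hat p^*\|_{L^\infty(\hat Q)} = 1$. For the cut sets, the indicator functions $\chi_{\hat S_n}$ are bounded in $L^\infty(\hat Q)$; by the Banach-Alaoglu theorem, a further subsequence satisfies $\chi_{\hat S_n} \overset{*}{\rightharpoonup} \chi^*$ in $L^\infty(\hat Q)$ with $0 \le \chi^* \le 1$ a.e.\ and $\int_{\hat Q} \chi^* = \lim_n |\hat S_n| \ge \theta$.

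Finally, I would pass to the limit in $\|\hat p_n\|_{L^2(\hat S_n)}^2 = \int_{\hat Q} \hat p_n^2 \,\chi_{\hat S_n}$, using that $\hat p_n^2 \to (\hat p^*)^2$ uniformly (hence strongly in $L^1$) and that $\chi_{\hat S_n} \overset{*}{\rightharpoonup} \chi^*$, to obtain $\int_{\hat Q} (\hat p^*)^2 \chi^* = 0$. Since $\chi^* \ge 0$, this forces $\hat p^* = 0$ a.e.\ on $\{\chi^* > 0\}$, which has positive measure (at least $\theta$). A polynomial vanishing on a set of positive Lebesgue measure vanishes identically, contradicting $\|\hat p^*\|_{L^\infty(\hat Q)} = 1$.

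The main obstacle is handling the \emph{arbitrariness} of the measurable subset $S$: a naive scaling argument is insufficient because $S$ may be distributed through $Q$ in pathological ways (e.g., a thin fractal-like set). The weak-$*$ compactness of indicator functions in $L^\infty$, combined with the algebraic fact that polynomials cannot vanish on positive-measure sets, is precisely what gives the uniformity of the constant $C$ over all admissible $\hat S$.
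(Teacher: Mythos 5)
Your proof is correct, but it is genuinely different from what the paper does: the paper does not prove Lemma~\ref{50} at all, it simply defers to Proposition~1 of the cited work of Lozinski. Your two-step argument --- reduce to the reference element by scaling, then establish the cut-independent norm equivalence on $\mathbb{Q}_k$ by a compactness/contradiction argument combining uniform convergence of the polynomials, weak-$*$ compactness of the indicators $\chi_{\hat S_n}$ in $L^\infty$, and the fact that a nonzero polynomial cannot vanish on a set of positive measure --- is a standard and complete self-contained proof of exactly this statement, and every limit passage you perform is justified (in particular the splitting $\int \hat p_n^2\chi_{\hat S_n}-\int(\hat p^*)^2\chi^*$ into a uniformly small term and a weak-$*$ term, and the bound $\abs{\{\chi^*>0\}}\ge\int\chi^*\ge\theta$). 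Two small points of precision: $\mathbb{Q}_k(\R^d)$ is \emph{not} stable under arbitrary affine maps (a rotation raises the coordinate degree), but it is stable under the diagonal scalings plus translations that map the axis-aligned parametric boxes $Q\in\hat{\mathcal M}_{0,h}$ to $(0,1)^d$, which is all you use; and the scaling naturally produces $h_Q^{-d/2}$ rather than $h^{-d/2}$, so your step $\abs{\det D\Phi_Q}^{1/2}\approx h^{d/2}$ implicitly identifies the local and global mesh sizes --- consistent with the paper's own loose use of $h$ but worth flagging. One could also note that your argument is non-constructive (the constant $C(\theta,k,d)$ is obtained by contradiction and is not explicit); a slightly more elementary variant avoids Banach--Alaoglu altogether by observing that $\norm{\hat p^*}_{L^2(\hat S_n)}\to 0$ forces $\abs{\{\abs{\hat p^*}\le\delta\}}\ge\theta$ for every $\delta>0$, hence $\abs{\{\hat p^*=0\}}\ge\theta$. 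None of this affects the validity of what you wrote.
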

\begin{proof}
	See Proposition 1 in \cite{lozinski}. 
\end{proof}
\begin{lemma}[Hardy's inequality, \cite{brezis2010functional}
	]\label{hardy_ineq}
	Let  $\Omega\subset\R^d$ be a bounded open set of class $C^1$. Then there is a constant $C > 0$ such that
	\begin{equation}
	\norm{\frac{u}{d}}_{L^2(\Omega)}\le C \norm{\nabla u}_{L^2(\Omega)}\qquad\forall\ u\in H^{1}_{0}(\Omega),
	\end{equation}
	where $\text{d}(x):=\operatorname{dist}(x,\Gamma)$.
\end{lemma}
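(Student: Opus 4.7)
The plan is to establish Hardy's inequality by localizing near the boundary and reducing to a half-space, where a direct integration-by-parts argument applies. By density of $C_c^\infty(\Omega)$ in $H_0^1(\Omega)$, I would first assume $u \in C_c^\infty(\Omega)$ and split $\Omega$ into a bulk region $\Omega_{\delta_0} := \{x \in \Omega : d(x) > \delta_0\}$ and the tubular neighborhood $U_{\delta_0} := \Omega \setminus \Omega_{\delta_0}$, for some fixed $\delta_0 > 0$ depending only on $\Omega$. On $\Omega_{\delta_0}$ the weight $1/d$ is bounded by $1/\delta_0$, so that $\norm{u/d}_{L^2(\Omega_{\delta_0})} \le \delta_0^{-1} \norm{u}_{L^2(\Omega)} \le C \norm{\nabla u}_{L^2(\Omega)}$ by the Poincaré inequality on $H^1_0(\Omega)$.

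Next, I would handle the boundary strip $U_{\delta_0}$ via charts. Since $\partial \Omega$ is of class $C^1$, cover it by finitely many open sets $V_1,\ldots,V_N$, each admitting a $C^1$ diffeomorphism $\Phi_i$ that straightens $\partial \Omega \cap V_i$ into a flat piece of $\{y_d = 0\}$ and sends $V_i \cap \Omega$ into $\{y_d > 0\}$. Choose a $C^1$ partition of unity $\{\chi_i\}$ subordinate to the cover $\{V_i\}$ of $U_{\delta_0}$, so that it suffices to bound $\norm{\chi_i u / d}_{L^2}$ for each $i$. Uniform $C^1$ bounds on $\Phi_i$ and $\Phi_i^{-1}$ imply that $d(x)$ and the pulled-back normal coordinate $y_d$ are comparable up to multiplicative constants depending only on the charts.

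The remaining half-space case reduces to the classical one-dimensional Hardy inequality: for smooth $v = (\chi_i u) \circ \Phi_i^{-1}$ vanishing at $y_d = 0$ and of compact support, integration by parts in the normal direction gives
\begin{equation*}
\int_0^\infty \frac{|v|^2}{y_d^2}\, dy_d = \int_0^\infty \frac{2 v\, \partial_d v}{y_d}\, dy_d,
\end{equation*}
with vanishing boundary terms at $y_d=0$ and $y_d=+\infty$; Cauchy--Schwarz then yields $\norm{v/y_d}_{L^2} \le 2 \norm{\partial_d v}_{L^2}$. Integrating over the tangential variables, changing variables back via $\Phi_i$, and summing over the charts and with the bulk estimate produces the stated inequality.

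The main technical obstacle is verifying that $d(x)$ and the straightened coordinate $y_d$ are equivalent on each chart with constants controlled uniformly by the $C^1$ norm of the boundary parametrizations, so that the final constant does not blow up; this is precisely where the $C^1$ regularity of $\Omega$ enters. Once this comparison is established, the rest is a routine assembly of local estimates by the partition of unity and extension by density from $C_c^\infty(\Omega)$ to $H^1_0(\Omega)$.
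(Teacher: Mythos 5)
Your proof is correct. Note that the paper does not prove this lemma at all: it is quoted as a classical result with a reference to Brezis, and the argument you give (density of $C_c^\infty(\Omega)$, Poincar\'e in the bulk, a $C^1$ partition of unity and boundary-flattening charts, comparability of $d(x)$ with the straightened normal coordinate, and the one-dimensional Hardy inequality $\norm{v/y_d}_{L^2}\le 2\norm{\partial_d v}_{L^2}$ obtained by integrating $|v|^2\,\partial_{y_d}(-1/y_d)$ by parts) is precisely the standard textbook proof found in that reference. The only points worth making explicit in a full write-up are that the boundary term $|v|^2/y_d$ vanishes at $y_d=0$ because $v$ is $C^1$ and vanishes on $\{y_d=0\}$, and that the commutator terms $u\nabla\chi_i$ produced by the partition of unity are absorbed via the Poincar\'e inequality, exactly as you do in the bulk region.
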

\begin{remark}
	Viceversa,  it is possible to characterize functions in $ H^1_0(\Omega)$ as functions in $H^1(\Omega)$ such that $\frac{u}{d}\in L^2(\Omega)$ (\cite{brezis2010functional}).
\end{remark}
\begin{lemma}\label{lemma_ineq}
	Let $\Omega_1\subset\Omega$ with boundary $\Gamma_1$ such that $\Omega_1=\{x\in\Omega: \operatorname{dist}(x,\Gamma)\ge Ch \}$, where $C\ge 1$ fixed and $\operatorname{dist}(\Gamma,\Gamma_1)\le Ch$. It holds that
	\begin{equation*}
	\norm{v}_{L^2(\Omega\setminus\overline\Omega_1)}\le C h^s \norm{v}_{H^s_i(\Omega)}\qquad\forall\ v\in H^s_i(\Omega),
	\end{equation*}
	where the interpolation space $H^s_i(\Omega)$ or $\left(H^1_0(\Omega),L^2(\Omega) \right)_{s,2}$ is isomorphic to $H^s(\Omega)$ for $0\le s<\frac{1}{2}$, to $H^{\frac{1}{2}}_{00}(\Omega)$ for $s=\frac{1}{2}$ and to $H^s_0(\Omega)$ for $\frac{1}{2}<s\le 1$ (see \cite{tartar}). 
\end{lemma}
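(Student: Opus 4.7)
The strategy is to establish the estimate at the two endpoints $s=0$ and $s=1$, and then obtain the full range $0 \le s \le 1$ by real interpolation, using the identification of the interpolation spaces $(H^1_0(\Omega), L^2(\Omega))_{1-s,2}$ with $H^s_i(\Omega)$ as stated in the lemma (a classical fact, see for instance Tartar's monograph already cited).

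\textbf{Endpoint $s=0$.} This is immediate: the restriction operator $T v := v|_{\Omega \setminus \overline\Omega_1}$ satisfies $\|Tv\|_{L^2(\Omega\setminus\overline\Omega_1)} \le \|v\|_{L^2(\Omega)}$, so $T : L^2(\Omega) \to L^2(\Omega\setminus\overline\Omega_1)$ has norm $1$.

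\textbf{Endpoint $s=1$.} Here I would use Hardy's inequality (Lemma \ref{hardy_ineq}) together with the geometric assumption on $\Omega_1$. For $v \in H^1_0(\Omega)$, on the strip $\Omega\setminus\overline\Omega_1$ one has $d(x) = \operatorname{dist}(x,\Gamma) \le Ch$ by the hypothesis $\operatorname{dist}(\Gamma,\Gamma_1) \le Ch$, whence
\begin{equation*}
\|v\|_{L^2(\Omega\setminus\overline\Omega_1)}^2 = \int_{\Omega\setminus\overline\Omega_1} |v|^2 \le C^2 h^2 \int_{\Omega\setminus\overline\Omega_1} \frac{|v|^2}{d^2} \le C^2 h^2 \left\| \frac{v}{d} \right\|_{L^2(\Omega)}^2 \le C h^2 \|\nabla v\|_{L^2(\Omega)}^2.
\end{equation*}
Hence $T : H^1_0(\Omega) \to L^2(\Omega\setminus\overline\Omega_1)$ has norm bounded by $Ch$.

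\textbf{Interpolation.} With the two endpoint bounds $\|T\|_{L^2 \to L^2} \le 1$ and $\|T\|_{H^1_0 \to L^2} \le Ch$, the real interpolation method gives, for $\theta \in (0,1)$,
\begin{equation*}
\|T\|_{(H^1_0(\Omega), L^2(\Omega))_{\theta,2} \to L^2(\Omega\setminus\overline\Omega_1)} \le (Ch)^{1-\theta} \cdot 1^{\theta} = (Ch)^{1-\theta}.
\end{equation*}
Setting $\theta = 1 - s$ and invoking the identification of $(H^1_0(\Omega), L^2(\Omega))_{1-s,2}$ with $H^s_i(\Omega)$ (which is $H^s$ for $0 \le s < 1/2$, $H^{1/2}_{00}$ for $s = 1/2$, and $H^s_0$ for $1/2 < s \le 1$) yields the desired bound
\begin{equation*}
\|v\|_{L^2(\Omega\setminus\overline\Omega_1)} \le C h^s \|v\|_{H^s_i(\Omega)}.
\end{equation*}

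\textbf{Main obstacle.} The geometric/functional analytic part is routine. The only delicate point is the identification of the real interpolation space between $H^1_0(\Omega)$ and $L^2(\Omega)$, in particular the appearance of the Lions--Magenes space $H^{1/2}_{00}(\Omega)$ at the critical value $s = 1/2$, where $H^{1/2}_0(\Omega) = H^{1/2}(\Omega)$ but the interpolation space is strictly smaller. This subtlety is handled by citing \cite{tartar}, so no new work is needed here; it suffices to state the identification explicitly and apply the interpolation inequality.
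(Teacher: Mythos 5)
Your proof is correct and follows essentially the same route as the paper: the paper likewise proves the $s=1$ endpoint $\norm{v}_{L^2(\Omega\setminus\overline\Omega_1)}\le Ch\norm{\nabla v}_{L^2(\Omega)}$ via Hardy's inequality, records the trivial $s=0$ bound, and concludes by real interpolation. Your write-up is only slightly more explicit about the operator norms and the indexing convention for the interpolation parameter.
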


\begin{proof}
	We prove the following (like in \cite{lew_negri_2011}):
	\begin{equation}\label{eq3}
	\norm{v}_{L^2(\Omega\setminus\Omega_1)}\le C h\norm{\nabla v}_{L^2(\Omega)}\qquad\forall\ v\in H^1_{0}(\Omega).
	\end{equation}
	We define $d(x):=\operatorname{dist}(x,\Gamma)$ $\forall\ x\in\Omega\setminus\overline\Omega_1$.
	By assumption $d(x)\le C h$, hence $1\le \frac{C h^2}{\abs{d(x)}^2}$.
	\begin{equation*}
	\begin{aligned}
	\int_{\Omega\setminus\overline\Omega_1}\abs{v}^2\le C h^2\int_{\Omega\setminus\overline\Omega_1}\frac{\abs{v}^2}{\abs{d}^2}\le C h^2 \int_{\Omega}\frac{\abs{v}^2}{\abs{d}^2} \le C h^2 \int_{\Omega}\abs{\nabla v}^2,
	\end{aligned}
	\end{equation*}
	where we employed Hardy's inequality from Lemma~\ref{hardy_ineq}. Moreover:
	\begin{equation}\label{eq4}
	\norm{v}_{L^2(\Omega\setminus\overline\Omega_1)}\le \norm{v}_{L^2(\Omega)}\qquad\forall\ v\in H^1_{0}(\Omega).
	\end{equation}
	
	At this point, let us interpolate estimates \eqref{eq3} and \eqref{eq4}, getting
	\begin{equation}\label{eq5}
	\norm{v}_{L^2(\Omega\setminus\overline\Omega_1)}\le C h^s \norm{v}_{H^s_i(\Omega)}\qquad\forall\ v\in H^s_i(\Omega).
	\end{equation}
\end{proof}



\section*{Acknowledgements}
We would like to thank Pablo Antol\'in who provided us a tool to perform integration on trimmed geometries in GeoPDEs.

\bibliographystyle{siamplain}
\bibliography{bibliography.bib}
\end{document}